\newcommand{\End}{{\rm End}}
\newcommand{\CC}{\mathbb{C}}
\newcommand{\QQ}{\mathbb{Q}}
\newcommand{\ZZ}{\mathbb{Z}}
\newcommand{\NN}{\mathbb{N}}
\newcommand{\kk}{\Bbbk}
\newcommand{\KK}{\mathbb{K}}
\newcommand{\FF}{\mathbb{F}}
\newcommand{\bsep}{\beta_{\mathrm{sep}}}
\newcommand{\bfield}{\beta_{\mathrm{field}}}
\newcommand{\Dreg}{D_{\mathrm{reg}}}
\newcommand{\Dspan}{D_{\mathrm{span}}}
\newcommand{\Dirr}{D^\otimes_\mathrm{irr}}
\newcommand{\topdeg}{\operatorname{topdeg}}
\newcommand{\Char}{\operatorname{char}}
\newcommand{\Frac}{\operatorname{Frac}}
\newcommand{\Aut}{\operatorname{Aut}}
\newcommand{\im}{\operatorname{im}}
\newcommand{\bfi}{\mathbf{i}}
\newcommand{\bfj}{\mathbf{j}}
\newcommand{\bfk}{\mathbf{k}}
\newcommand{\Reynolds}{\mathcal{R}}
\newcommand{\Tr}{\operatorname{Tr}}
\newcommand{\Hom}{\operatorname{Hom}}
\newcommand{\Spec}{\operatorname{Spec}}
\newcommand{\Irr}{\operatorname{Irr}}
\newcommand{\Vreg}{V_\mathrm{reg}}
\newcommand{\sslash}{\mathord{/\mkern-6mu/}}
\newcommand{\init}{\operatorname{in}}
\newcommand{\mcM}{\mathcal{M}}
\newcommand{\mcB}{\mathcal{B}}
\newcommand{\Gal}{\operatorname{Gal}}
\newcommand{\Sym}{\operatorname{Sym}}
\newtheorem{theorem}{Theorem}[section]
\newtheorem{lemma}[theorem]{Lemma}
\newtheorem{corollary}[theorem]{Corollary}
\newtheorem{proposition}[theorem]{Proposition}
\newtheorem{observation}[theorem]{Observation}
\newtheorem*{theorem*}{Theorem}
\newtheorem*{proposition*}{Proposition}
\newtheorem*{conjecture*}{Conjecture}
\theoremstyle{definition}
\newtheorem{definition}[theorem]{Definition}
\newtheorem{example}[theorem]{Example}
\newtheorem*{remark}{Remark}
\newtheorem{notation}[theorem]{Notation}
\newcommand{\subjclass}[2][1991]{
  \let\@oldtitle\@title
  \gdef\@title{\@oldtitle\footnotetext{#1 \emph{Mathematics Subject Classification.} #2}}
}
\newcommand{\keywords}[1]{%
  \let\@@oldtitle\@title%
  \gdef\@title{\@@oldtitle\footnotetext{\emph{Key words and phrases.} #1.}}%
}
\title{Generic orbits, normal bases, and generation degree for fields of rational invariants}
\author{Ben Blum-Smith and Harm Derksen}
\date{\today}
\subjclass[2020]{Primary 13A50 Secondary 20C15}
\keywords{Invariants, rational invariants, Noether number, degree bound, field generators, lattices, irreducible representations}
\begin{document}

\maketitle

\begin{abstract}
    For a faithful linear representation $V$ of a finite group $G$ in characteristic not dividing the group order, we show that if the {\em field Noether number} $\bfield$ is the minimum $d$ such that the invariant polynomials of degree $\leq d$ generate the field $\kk(V)^G$ of rational invariants as a field, and the {\em spanning degree} $\Dspan$ is the minimum $d$ such that the polynomials of degree $\leq d$ span the rational function field $\kk(V)$ as a vector space over $\kk(V)^G$, then $\bfield \leq 2\Dspan + 1$, and this is sharp. This generalizes a recent result of Edidin and Katz.
    
    We also study $\Dspan$. We show that it is related to various quantities previously studied in invariant and representation theory. Dropping the hypothesis on the field characteristic, we prove several basic inequalities, including that it is monotonically nondecreasing in $G$,  nonincreasing in $V$, and satisfies $\Dspan \leq |G|-1$. The latter refines a recent result of Koll\'ar and Tiep.
\end{abstract}

\tableofcontents 

\section{Introduction}

Let $G$ be a finite group, $\kk$ a field, and $V$ a finite-dimensional, faithful representation of $G$ over $\kk$. Let $\Dspan$ denote the minimum degree $d$ so that the polynomials on $V$ of degree at most $d$ span the field $\kk(V)$ of all rational functions as a vector space over the field $\kk(V)^G$ of rational $G$-invariants. Let $\bfield$ denote the minimum degree $d$ such that the $G$-invariant polynomials of degree at most $d$ generate the field $\kk(V)^G$ as a field extension of $\kk$. The main objective of this paper is to prove the following sharp inequality relating these quantities:

\begin{theorem}\label{thm:main}
    Assume that the characteristic of $\kk$ does not divide $|G|$. Then
    \[
    \bfield \leq 2\Dspan + 1.
    \]
\end{theorem}

In this introduction, we provide context and motivation for this theorem, mention some ancillary results, and discuss the method of proof.

\subsection{Context and motivation}

The study of $\bfield$ is a recent variant on long-standing research programs in invariant theory. It has an additional motivation coming from signal processing. To elaborate:

The ring $\kk[V]^G$ of $G$-invariant polynomials on $V$ is finitely generated as a $\kk$-algebra, and there is a very old, but still active, research program to get control over the degrees of the generators \cite{noether, schmid1991finite, domokos-hegedus, sezer2002sharpening, fleischmann2000noether, fogarty2001noether, fleischmann2006noethermodular, symonds2011castelnuovo, cziszter-domokos, cziszter2014noether, cziszter2016interplay, cziszter2019lower, gandini2019ideals, hegedHus2019finite, ferraro2021noether, kemper2025schemes}. The minimum degree $d$ such that $\kk[V]^G$ is generated as a $\kk$-algebra by its elements of degree $\leq d$ is called the {\em Noether number}, and denoted $\beta(G,V)$.

There is a younger program to bound the degrees of polynomials that achieve the less ambitious (but still very useful) goal of separating the orbits of $G$ on $V$ \cite{derksen-kemper, domokos2007typical, draisma2008polarization, kemper2009separating, sezer2009constructing, dufresne2009separating, dufresne2009cohen, kohls-kraft, domokos2011helly, dufresne2013finite, kohls2013separating,  dufresne2015separating, domokos, reimers2018separating, reimers2020separating, lopatin2021separating, domokos2022separating, kemper2022separating, 
domokos2024separating, schefler2025separating, 
schefler2023separating,  zhao2025separating, jia2025modular}. The minimum $d$ such that there exists a separating set is called the {\em separating Noether number}, and is written $\bsep(G,V)$.

More recently still, a program is developing to bound the degrees  of the invariant polynomials required to generate the field $\kk(V)^G$ of invariant rational functions as a field extension of $\kk$ \cite{fleischmann2007homomorphisms, hubert-labahn, blum2024degree, blum2024rational, edidin2025orbit, edidin2025generic, reimers2025generic}. As mentioned above, the minimum $d$ such that $\kk(V)^G$ is generated by polynomial invariants of degree $\leq d$ is denoted $\bfield$, or $\bfield(G,V)$ to emphasize the dependence on $G$ and $V$. Field generation is a less stringent demand on a set of invariants than algebra generation; and if $\kk$ is algebraically closed of characteristic zero, it is also less stringent than orbit separation, because generation of the field $\kk(V)^G$ is equivalent under this hypothesis to {\em generic} orbit separation. So this program is a relaxation of both of the above.

The recent interest in $\bfield(G,V)$ is in part motivated by a circle of signal processing problems known as {\em orbit recovery} or {\em (generalized) multi-reference alignment} \cite{abbe2017sample, perry2019sample, bandeira2020optimal, fan2021maximum, bendory2022sparse, abas2022generalized,  bendory2022dihedral, bandeira2023estimation, edidin2024orbit, edidin2025orbit, bendory2025generalized}. These problems concern reconstruction of (the orbit of) a signal from many samples that have been corrupted both by random additive noise and by transformations randomly drawn from a group. An important example is cryo-electron microscopy, where many very noisy images of a molecule are viewed from unknown directions \cite{sigworth2016principles, singer2018mathematics, bendory2020single}. It is shown in \cite{bandeira2023estimation} that, in the high-noise regime, the number of samples required for accurate reconstruction varies as $\sigma^{2d}$, where $\sigma$ is the noise level and $d$ is the minimum degree so that the invariant polynomials of degree $\leq d$ distinguish the orbit of the sample from other orbits. For a generic signal, $\bfield$ provides a bound on this $d$.

While $\Dspan$ is not to our knowledge the subject of previous research attention, it is closely related to several other quantities of interest in invariant and representation theory:

\begin{itemize}
    \item Researchers in invariant theory have investigated the minimum $d$ such that the ring of polynomial functions $\kk[V]$ is generated as a module over the invariant algebra $\kk[V]^G$ by the polynomials of degree $\leq d$. This quantity is known as $\topdeg(G,V)$ or $\beta(\kk[V],\kk[V]^G)$ \cite{schmid1991finite, kohls2014top, cziszter2019lower} (and see also \cite{cziszter2013generalized, cziszter-domokos, cziszter2014noether}, which study a common generalization of $\topdeg(G,V)$ and the classical Noether number $\beta(G,V)$). The notation $\topdeg$ (for ``top degree") refers to the fact that it can be alternatively defined as the maximum nonzero degree in the {\em coinvariant algebra} $\kk[V]_G:= \kk[V]/\kk[V]^G_{+}\kk[V]$, the quotient of the polynomial algebra by the Hilbert ideal (generated by the invariants of positive degree). The definitions are equivalent by the graded Nakayama lemma.
    
    The quantity $\Dspan(G,V)$ is to $\topdeg(G,V)$ as $\bfield(G,V)$ is to the classical Noether number $\beta(G,V)$, i.e., it replaces a quantity related to ring and module structure with the analogous quantity related to field and vector space structure. The study of $\Dspan(G,V)$ was originally proposed to the first-named author by Victor Reiner  \cite[Question~5.11]{blum2024degree} in view of this analogy. One has immediately that $\Dspan(G,V) \leq \topdeg(G,V)$, i.e., $\Dspan$ is a relaxation of $\topdeg(G,V)$.
    
    \item A well-known theorem of Burnside asserts  (when $\kk = \CC$) that every irreducible representation of $G$ occurs as a constituent of some tensor power of $V$. (This has been generalized in various ways \cite{steinberg1962complete, rieffel1967burnside, passman1995burnside, krason-kuhn}; when the field characteristic divides the group order, one interprets the word {\em constituent} as {\em composition factor}.) In 1964, Brauer \cite{brauer1964note} proved a bound on the minimum $d$ such that the tensor powers up to the $d$th are sufficient to collect every irreducible. Call this quantity $\Dirr(G,V)$, or $\Dirr$ for short;\footnote{It is referred to as $c(V)$ in \cite[Definition~2.1]{wehlau2003some}, but we use $\Dirr$ to be consistent with $\Dspan, \Dreg$.} then \cite[Theorem~$1^*$]{brauer1964note} asserts that 
    \[
    \Dirr\leq r-1,
    \]
    where $r$ is the number of distinct values that the character of the representation $V$ takes on $G$. This theorem has been generalized to monoids \cite{steinberg2014burnside}. 
    
    It follows from the Normal Basis Theorem, and will be proven below (Proposition~\ref{prop:c-Dreg-Dspan-topdeg}), that $\Dspan$ is an upper bound on $\Dirr$; furthermore, if $G$ is abelian and $\Char \kk$ does not divide $|G|$, then they are equal.
    
    \item A line of research in representation and invariant theory concerns the structure of the polynomial algebra $\kk[V]$ as a representation of $G$ \cite{bryant1993symmetrical, bryant1995groups, karagueuzian1999module, symonds2000group, karagueuzian2007module, karagueuzian2004module, symonds2007cyclic, symonds2024module}. A finding of this literature is that, as one increases the degree $d$, the $\kk$-space of polynomials $\kk[V]_{\leq d}$ of degree $\leq d$  asymptotically approaches many copies of the regular representation. In \cite{kollar2024simple} the authors study the minimum degree $d$ so that $\kk[V]_{\leq d}$ has the regular representation as a summand for the very first time. Call this number $\Dreg(G,V)$. Then the proof of the main result of \cite{kollar2024simple} shows that $\Dreg(G,V) \leq |G|-1$. This result does not depend on the characteristic of $\kk$.

    It will be shown below that $\Dreg \leq \Dspan$, and that, as with $\Dirr$, they are equal when $G$ is abelian and the characteristic does not divide $|G|$ (see Proposition~\ref{prop:c-Dreg-Dspan-topdeg}). Furthermore, the point of view taken in the present work leads to a short proof that $\Dspan \leq |G|-1$ in arbitrary characteristic, slightly refining the main result of \cite{kollar2024simple}.
\end{itemize}

Thus, Theorem~\ref{thm:main} relates two quantities $\bfield$ and $\Dspan$ that are individually of interest. To demonstrate usefulness, a quick application is given in Section~\ref{sec:applications}.

In at least one respect, $\Dspan(G,V)$ is better-behaved than $\bfield(G,V)$: it satisfies monotonicity properties in both arguments (Theorem~\ref{thm:monotonic-in-the-rep} and Lemma~\ref{lem:monotonic-in-the-group} below). Both of these properties fail for $\bfield$. Thus, one can hope that Theorem~\ref{thm:main} bounds a more unruly quantity with a better-behaved one.

Theorem~\ref{thm:main} also generalizes a recent result of Edidin and Katz \cite{edidin2025orbit}. Based on suggestive results from the signal processing literature mentioned above, it has been suspected (e.g., \cite[Remark~4.3]{bandeira2023estimation}) that $\bfield(G,V)\leq 3$ if $V$ is the regular representation of $G$ over $\CC$. Edidin and Katz showed that if $V$ so much as contains the regular representation, then $\bfield(G,V)\leq 3$.\footnote{Edidin and Katz actually proved that the invariant polynomials of degree $\leq 3$ separate generic orbits of $G$ on $V$, with no hypothesis on the field except that it is infinite. In characteristic zero, $\bfield(G,V) \leq 3$ follows; this will be discussed further in Section~\ref{ex:regular} below.} The hypothesis that $V$ contains the regular representation implies that $\Dspan = 1$. Thus, we recover their conclusion that $\bfield(G,V)\leq 3$ from the special case of Theorem~\ref{thm:main} in which $\Dspan(G,V) = 1$. 

The implication mentioned in the last paragraph, that Edidin and Katz' hypothesis that $V$ contains the regular representation implies that $\Dspan=1$, is argued carefully in Section~\ref{ex:regular}, but let us outline the argument here in an effort to give the reader a sense of why it is plausible. The above-referenced monotonicity of $\Dspan$ in the representation (Theorem~\ref{thm:monotonic-in-the-rep} below) implies we can reduce to the case that $V$ is itself the regular representation. Let $x_1,\dots,x_n$ be  coordinate functions on $V$. Since $V$ is the regular representation, $n=|G|$, and the matrix $(gx_j)_{g,j}$ for $g\in G$ and $j=1,\dots,n$ is a square matrix. By evaluating at a $v\in V$ whose $G$-images $gv$ are linearly independent, we see that this matrix must be nonsingular. It follows that the coordinate functions are linearly independent over the invariant field because a nontrivial linear relation between them with invariant coefficients would also yield a nontrivial element in the kernel of the matrix $(gx_j)_{g,j}$. Dimension counting then yields that the coordinate functions span $\kk(V)$ over the invariant field, thus $\Dspan = 1$.

\subsection{Results}\label{sec:results}

\begin{notation}
    We adopt the following standard notation. The ring of polynomials on $V$ is $\kk[V]$. The subring invariant under the natural $G$-action given by $(gf)(v) := f(g^{-1}v)$ is  $\kk[V]^G$. By $\kk[V]_{\leq d}$ we denote the $\kk$-vector space of polynomials on $V$ of degree less than or equal to $d$, while $\kk[V]_d$ is the $\kk$-vector space of {\em homogeneous} polynomials of degree exactly $d$. The field of rational functions on $V$ with coefficients in $\kk$ is $\kk(V)$, and the field of rational functions invariant under the natural $G$-action is $\kk(V)^G$. The group algebra of $G$ over $\kk$ is $\kk G$.
\end{notation}

\begin{definition}\label{def:bfield-and-Dspan}
    Following \cite{blum2024degree}, the {\em field Noether number} $\bfield$ is
    \[
    \bfield:= \min(d\in \NN: \kk(V)^G\text{ is generated as a field by }\kk[V]^G_{\leq d}).
    \]
\end{definition}
\begin{definition}
    The {\em spanning degree} $\Dspan$ is
    \[
    \Dspan:= \min(d\in \NN: \kk[V]_{\leq d}\text{ spans }\kk(V)\text{ as }\kk(V)^G\text{-vector space}).
    \]
\end{definition}

\begin{notation}
    When we need to emphasize the dependence of $\bfield$ and $\Dspan$ on $G$ and $V$, we write $\bfield(G,V)$ and $\Dspan(G,V)$.
\end{notation}

The main result of this paper is Theorem~\ref{thm:main}. The proof is in Section~\ref{sec:main-result}. The inequality is sharp, as Section~\ref{ex:sharp} shows. Other results include:
\begin{itemize}
    \item A graded version of the Normal Basis Theorem  (Proposition~\ref{prop:normal-basis}), which serves as a lemma for Theorem~\ref{thm:main} but is proven in greater generality and may be independently useful.

    \item Several basic results on $\Dspan$, including:
    \begin{itemize}
        \item the inequalities and equalities mentioned above relating $\Dspan$ with $\Dreg$, $\Dirr$, and $\topdeg$ (Proposition~\ref{prop:c-Dreg-Dspan-topdeg});
        \item that $\Dspan(G,V)$ is monotonic nondecreasing in $G$ (Lemma~\ref{lem:monotonic-in-the-group}) and nonincreasing in (faithful) $V$ (Theorem~\ref{thm:monotonic-in-the-rep});
        \item that $\Dspan$ is subadditive along subdirect products (Proposition~\ref{prop:subadditivity});
        \item some general bounds on $\Dspan$: it never exceeds $|G|-1$ (Theorem~\ref{thm:refine-KP}), it is upper-bounded quadratically in the orbit lengths for permutation groups (Theorem~\ref{thm:permutations}), and it is bounded below by $\sqrt[n]{n!|G|}-O(n)$, where $n=\dim V$ (Proposition~\ref{prop:dim-counting-lower-bound} and Corollary~\ref{cor:explicit-lower-bound}).
    \end{itemize}
    \item An application of Theorem~\ref{thm:main} which, for $\kk=\CC$ and $G=C_p$ (the cyclic group of prime order $p$), bounds $\bfield(G,V)$ in terms of the character of $V$ (Proposition~\ref{prop:cyclotomic}).

\end{itemize}
Although Theorem~\ref{thm:main} requires the hypothesis $\Char \kk \nmid |G|$, the graded normal basis theorem and most of the results on $\Dspan$ do not. Indeed the general bound $\Dspan \leq |G|-1$ and the graded normal basis theorem are interesting primarily in the modular case. 

The monotonicity properties proven for $\Dspan$ both fail for $\bfield$.

\subsection{Methods and structure}\label{sec:methods}

The proof of the main result has the following steps:
\begin{enumerate}
    \item We consider the ideal $I$ in the ring $\KK[V]$, where $\KK:=\kk(V)^G$, that describes a generic orbit for the action of $G$ on $V$. The coefficients appearing in a generating set for this ideal form a generating set for the field $\kk(V)^G$ as an extension of $\kk$. Let $D_I$ be the minimal degree $d$ required for $\KK[V]_{\leq d}$ to contain such an ideal generating set.\label{step:generic-orbit-ideal}
    \item We show that $D_I\leq \Dspan + 1$.\label{step:bound-gen-orbit-degree}
    \item We view the ideal $I$ as describing linear relations over $\KK=\kk(V)^G$ between the various irreducible representations of $G$ occurring in a given isotypic component of $\kk[V]$, viewed as a $G$-representation. To find a generating set for $I$, it is sufficient to identify those linear relations occurring within $\KK[V]_{\leq D_I}$.\label{step:lin-rels-btw-irreducibles}
    \item We identify a graded copy of the regular representation $\Vreg$ inside $\kk[V]_{\leq \Dspan}$ that spans $\kk(V)$ over $\KK$. Using this, we write down matrix equations to find the linear relations over $\KK$ mentioned in Step~\ref{step:lin-rels-btw-irreducibles}, showing that the entries in the matrices can be taken to be invariant polynomials of degree at most $\max(\Dspan + D_I,2\Dspan)$. In view of Step~\ref{step:bound-gen-orbit-degree}, the result follows.\label{step:matrix-equations}
\end{enumerate}

The key lemma in Step~\ref{step:generic-orbit-ideal}, that the coefficients appearing in an ideal generating set for $I$ also generate $\KK$ as a field extension of $\kk$, closely follows fundamental work of M\"uller-Quade and Beth, Hubert and Kogan, and Kemper on algorithms to compute generators for a field of rational invariants \cite{muller1999calculating, hubert-kogan, kemper2007computation}. Step~\ref{step:bound-gen-orbit-degree} is a standard Gr\"obner basis argument. The main novelty of method in the present work is the point of view adopted in Step~\ref{step:lin-rels-btw-irreducibles}, of viewing the generic orbit ideal $I$ as determined by linear relations between copies of the same irreducible representation over the invariant field, which allows to write down the matrix equations in Step~\ref{step:matrix-equations} that prove the result.

While Theorem~\ref{thm:main} demands the non-modular hypothesis (i.e., that $|G|$ is not divisible by $\Char \kk$), we carry out as much as possible of the preparatory work over a field of any characteristic.

The paper is structured as follows. Section~\ref{sec:main-result} proves the main result. Section~\ref{sec:examples} concerns explicit examples, illustrating the proof technique in detail, and showing that the result is sharp and generalizes \cite{edidin2025orbit}. Section~\ref{sec:spanning-deg-bound} drops the hypothesis that $\Char \kk$ does not divide $|G|$ and gives the general results on $\Dspan$ mentioned in Section~\ref{sec:results} above. Section~\ref{sec:applications} gives a quick application of Theorem~\ref{thm:main}.

Here is a little more detail on Section 2. Steps~\ref{step:generic-orbit-ideal} and \ref{step:bound-gen-orbit-degree} in the above outline of Theorem~\ref{thm:main}'s proof are carried out in Section~\ref{sec:generic-orbit-ideal}, Steps~\ref{step:lin-rels-btw-irreducibles} and \ref{step:matrix-equations} are carried out in Section~\ref{sec:matrix-eqs}, and the pieces are assembled into the whole in Section~\ref{sec:assemble-proof}. Section~\ref{sec:graded-normal-basis} proves a graded version of the Normal Basis Theorem---the primary role of this is to provide the graded regular representation $\Vreg$ mentioned in Step~\ref{step:matrix-equations}, but we do some work to avoid the non-modular hypothesis because the result may be useful in other contexts. It is also shown in Section~\ref{sec:base-change} that we can assume $\kk$ is algebraically closed without loss of generality, which is used in Sections~\ref{sec:matrix-eqs}, \ref{sec:assemble-proof}, and part of Section~\ref{sec:graded-normal-basis} (and occasionally in later sections).  

\section{Proof of the main result}\label{sec:main-result}

In this section, we prove Theorem~\ref{thm:main}. Throughout, $G$ is a finite group, $\kk$ is a field, and $V$ is a finite-dimensional, faithful  representation of $G$ over $\kk$. 

The situation of Theorem~\ref{thm:main} is the non-modular one, where $\operatorname{char}\kk \nmid |G|$. However, we avoid imposing this assumption until Section~\ref{sec:matrix-eqs}, when it is used heavily.

For reference, the goal is to show, under these assumptions, that: 
\begin{center}
    {\em $\kk(V)^G$ is generated as a field by invariant polynomials of degree at most $2\Dspan + 1$.}
\end{center}

\begin{notation}\label{not:K}
    In what follows, we will be viewing $\kk(V)$ as a Galois field extension of $\kk(V)^G$, and as the image of a surjection of $\kk(V)^G$-algebras. So we adopt the notation 
    \[
    \KK := \kk(V)^G
    \]
    when we want to emphasize a view of the invariant field as a coefficient field or ground field.
\end{notation}

\subsection{Reduction to algebraically closed $\kk$}\label{sec:base-change}

Let $\FF$ be a field extension of $\kk$, and let $V_\FF = \FF\otimes_\kk V$ be the base change of $V$ to $\FF$, with the $G$-action given by the $G$-action on $V$ and trivial action on $\FF$. It is straightforward to check that $\Dspan$ is not affected by this base change:

\begin{lemma}\label{lem:base-change}
    We have
    \[
    \Dspan(G,V_\FF) = \Dspan(G,V).
    \]
\end{lemma}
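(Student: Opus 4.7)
The plan is to transfer the spanning property between the two settings using the determinant criterion for Galois bases. Write $L = \kk(V)$, $K = \kk(V)^G$, $L' = \FF(V_\FF)$, $K' = \FF(V_\FF)^G$. Since $V$ is faithful, so is $V_\FF$, and both $L/K$ and $L'/K'$ are Galois with group $G$, of degree $|G|$. Extension of scalars on coordinates gives an inclusion $\kk[V] \hookrightarrow \FF[V_\FF]$, hence an inclusion $L \hookrightarrow L'$, and the $G$-action on $L'$ restricts to that on $L$.

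The key observation is this: for any $|G|$ elements $f_1, \ldots, f_{|G|} \in \kk[V]$, the determinant
\[
\Delta := \det\bigl(\sigma(f_i)\bigr)_{\sigma \in G,\,i=1,\ldots,|G|}
\]
is a single element of $\kk[V] \subset \FF[V_\FF]$, since the $G$-action is the same in both rings. By the standard Dedekind/Artin-style argument for Galois extensions, $f_1,\ldots,f_{|G|}$ form a $K$-basis of $L$ iff $\Delta \neq 0$ in $L$, and they form a $K'$-basis of $L'$ iff $\Delta \neq 0$ in $L'$. Because $L \hookrightarrow L'$ is an injection of fields, these two nonvanishing conditions are equivalent, so the two basis properties are equivalent.

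Given this, both inequalities follow immediately. For $\Dspan(G, V_\FF) \leq \Dspan(G, V) = d$, choose a $K$-basis $\{f_i\} \subset \kk[V]_{\leq d}$ of $L$; by the criterion these same elements form a $K'$-basis of $L'$ lying in $\FF[V_\FF]_{\leq d}$, so $\FF[V_\FF]_{\leq d}$ spans $L'$ over $K'$. For the reverse direction, let $d = \Dspan(G, V_\FF)$ and fix a $\kk$-basis $\{e_\alpha\}$ of $\kk[V]_{\leq d}$, which is also an $\FF$-basis of $\FF[V_\FF]_{\leq d} = \FF \otimes_\kk \kk[V]_{\leq d}$. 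Since the latter spans $L'$ over $K'$, so does $\{e_\alpha\}$, and we can extract a $K'$-basis $e_{\alpha_1},\ldots,e_{\alpha_{|G|}}$; by the criterion again, these same elements form a $K$-basis of $L$, so $\kk[V]_{\leq d}$ spans $L$ over $K$.

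The only substantive point to verify is the equivalence in the second paragraph, which amounts to confirming that the Galois-theoretic determinant criterion is robust under the extension $L \subset L'$. This is essentially bookkeeping: the $G$-action is $\kk$-linear and therefore compatible with base change, so $\Delta$ is literally the same polynomial in both rings, and nonvanishing in a subfield is equivalent to nonvanishing in an overfield. I do not anticipate any genuine obstacle.
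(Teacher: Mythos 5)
Your proof is correct, and it takes a genuinely different route from the paper's. The paper's argument proceeds by flat base change: it reduces the spanning condition to linear independence of $|G|$ elements of $\kk[V]_{\leq d}$ over the respective invariant \emph{rings} (after clearing denominators), and then invokes the isomorphism $\FF[V_\FF]^G \cong \FF \otimes_\kk \kk[V]^G$ to transfer linear independence across the base change. Your argument instead invokes the Galois-theoretic determinant criterion: $|G|$ elements form a basis of a Galois extension iff the determinant of the matrix of their Galois conjugates is nonzero, and since the $G$-action is $\kk$-linear, this determinant is literally the same element of $\kk[V]$ in both settings. Both proofs are clean; the paper's is perhaps more elementary (commutative algebra of tensor products plus clearing denominators), while yours isolates the transfer of the spanning property into a single invariant $\Delta$ that visibly lives in the common subring $\kk[V]$, which is conceptually crisp. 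One small point worth making explicit in your write-up: the criterion applies to $L'/K'$ with group $G$ precisely because $V_\FF$ is faithful and $K' = (L')^G$, so by Artin's theorem $L'/K'$ is Galois of degree $|G|$ — you note the faithfulness but it deserves a sentence saying why this gives the needed Galois structure.
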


\begin{proof}
    View $\kk(V)$ as a subfield of $\FF(V_\FF)$ via the inclusion extended from $\kk\hookrightarrow \FF$. 
    
    There is a graded, $G$-equivariant isomorphism of $\FF[V_\FF]$ with $\FF\otimes_\kk \kk[V]$ (where $\FF$ is viewed as a graded $\kk$-algebra concentrated in degree $0$). Thus in particular, (i) $\FF[V_\FF]_{\leq d} \cong \FF\otimes_\kk \kk[V]_{\leq d}$ for any $d$, and (ii) $\FF[V_\FF]^G \cong  (\FF\otimes_\kk \kk[V])^G \cong \FF\otimes_\kk \kk[V]^G$, where the first isomorphism is because of the equivariance of the isomorphism $\FF[V_\FF]\cong \FF\otimes_\kk \kk[V]$, and the second is because the functor of invariants commutes with flat base change.
    
    From $\FF[V_\FF]_{\leq d} \cong \FF\otimes_\kk \kk[V]_{\leq d}$ it follows that for any $d$, $\FF[V_\FF]_{\leq d}$ spans $\FF(V_\FF)$ over $\FF(V_\FF)^G$ if and only if $\kk[V]_{\leq d}\subseteq \FF(V_\FF)$ does so. We thus have to show that $\kk[V]_{\leq d}$ spans $\FF(V_\FF)$ over $\FF(V_\FF)^G$ if and only if it spans $\kk(V)$ over $\kk(V)^G$. By dimension counting, these conditions hold if and only if $\kk[V]_{\leq d}$ contains $|G|$ elements linearly independent, respectively, over $\FF(V_\FF)^G$ and over $\kk(V)^G$.

    The ability to clear denominators means that, for any set $S$ of elements of $\kk[V]_{\leq d}$, $S$ is linearly independent over $\kk(V)^G$ if and only if it is linearly independent over $\kk[V]^G$, and linearly independent over $\FF(V_\FF)^G$ if and only if it is linearly independent over $\FF[V_\FF]^G$.

    But since $\FF[V_\FF]^G \cong \FF\otimes_\kk \kk[V]^G$, linear independence of $S\subseteq \kk[V]_{\leq d}$ over $\FF[V_\FF]^G$ is equivalent to linear independence over $\kk[V]^G$.  This completes the proof.
\end{proof}

It is also the case that $\bfield(G,V_\FF) = \bfield(G,V)$, by \cite[Lemma~2.1]{blum2024degree}. Thus, in proving Theorem~\ref{thm:main}, it costs us no generality to assume that $\kk$ is algebraically closed. We make this assumption in Sections~\ref{sec:matrix-eqs} and \ref{sec:assemble-proof} below, and part of Section~\ref{sec:graded-normal-basis}.

\subsection{The generic orbit ideal}\label{sec:generic-orbit-ideal}

Viewing both $\kk(V)^G$ and $\kk[V]$ as subrings of $\kk(V)$, consider the natural multiplication homomorphism

\begin{equation}\label{eq:Xi}
\Xi: \kk(V)^G\otimes_\kk \kk[V] \rightarrow \kk(V).
\end{equation}

This is evidently a $G$-equivariant map. Note that because $\kk(V)$ is finite (of degree $|G|$) over $\kk(V)^G$, the coordinate functions on $V$ are integral over $\kk(V)^G$, so the map $\Xi$ is surjective. 

\begin{definition}\label{def:generic-orbit-ideal}
Define the {\em generic orbit ideal} as 
\[
I:= \ker \Xi.
\]
\end{definition}

The reason for the name is as follows. Suppose $\phi:X\rightarrow Y$ is a dominant morphism of reduced, irreducible affine $\kk$-schemes. Then $\phi^*:\kk[Y]\rightarrow \kk[X]$ is an injective homomorphism of integral domains, and we may identify $\kk[Y]$ with its image $\phi^*(\kk[Y])$ in $\kk[X]$. The {\em generic fiber of $\phi$}, i.e., the fiber $\Spec \kk(Y)\times_Y X$ over the generic point of $Y$, embeds canonically as a closed subscheme in $X_{\kk(Y)}$, the base change of $X$ to the field of rational functions on $Y$; the embedding is dual to the canonical map
\[
\Xi:\kk(Y)\otimes_\kk \kk[X]\rightarrow \kk(Y)\otimes_{\kk[Y]} \kk[X]
\]
on the coordinate rings. Thus, the generic fiber of $\phi:X\rightarrow Y$ is isomorphically the closed subscheme of $X_{\kk(Y)}$ defined by the ideal $\ker \Xi$. Furthermore, $\kk(Y)\otimes_{\kk[Y]}\kk[X]$, as a localization of $\kk[X]$, can be identified with a subring of $\kk(X)$, the field of rational functions on $X$, and composition with this inclusion turns $\Xi$ into the natural multiplication map
\[
\kk(Y)\otimes_\kk \kk[X]\rightarrow \kk(X).
\]
If $\phi:X\rightarrow Y$ is a {\em finite} map, then $\kk(Y)\otimes_{\kk[Y]}\kk[X]$ is all of $\kk(X)$ (e.g., by \cite[Proposition~16]{kohls2014top}), so $\Xi$ is surjective onto $\kk(X)$, and the generic fiber of $\phi$ is isomorphically $\Spec \kk(X)$.

Definition~\ref{def:generic-orbit-ideal} comes from applying all this with $X=V$ and $Y=V\sslash G=\Spec \kk[V]^G$, the categorical quotient of $V$ by $G$. Because $G$ is finite, $V\sslash G$ is in fact a {\em geometric} quotient, i.e., the points of $V\sslash G$ are in bijection with the orbits of $G$ on $V$. Then the generic fiber of the finite map $V\rightarrow V\sslash G$ may be called the {\em generic orbit} of $G$ on $V$. Define 
\[
\KK:=\kk(V)^G=\Frac(\kk[V]^G) = \kk(V\sslash G),
\]
the field of rational functions on the quotient $V\sslash G$. (The equality $\kk(V)^G=\Frac(\kk[V]^G)$ is guaranteed by the finiteness of $G$.) The above considerations show that the ideal $I$ of Definition~\ref{def:generic-orbit-ideal} defines the generic orbit of the action of $G$ on $V$ as a closed subscheme of the affine space $V_{\KK}$, hence the name {\em generic orbit ideal}.

Although both tensor factors of the domain of $\Xi$ are subrings of the field $\kk(V)$, it is clarifying for what follows to distinguish between them notationally. Suppose $n = \dim_\kk V$; let $x_1,\dots,x_n$ be a basis of coordinate functions on $V$, so $\kk(V) = \kk(x_1,\dots,x_n)$. Then
\[
\kk(V)^G\otimes_\kk \kk[V] \cong \KK[X_1,\dots,X_n]
\]
for new indeterminates $X_1,\dots,X_n$ subject to the same $G$-action (defined over $\kk$) as $x_1,\dots,x_n$. We think of these new indeterminates as a basis of coordinate functions on the $V$ that appears in the second tensor factor in \eqref{eq:Xi}, whereas the $V$'s appearing in the first tensor factor and in the codomain both have $x_1,\dots,x_n$ as coordinate functions. Henceforward, we identify $\kk(V)^G\otimes_\kk \kk[V]$ with $\KK[X_1,\dots,X_n]$; then the map $\Xi$ can be viewed as the map
\begin{equation}\label{eq:Xi-X}
\KK[X_1,\dots,X_n]\rightarrow \kk(V)
\end{equation}
obtained by canonically embedding $\KK=\kk(V)^G$ in $\kk(V)$, and meanwhile mapping $X_j \mapsto x_j$ for  $j=1,\dots, n$.

For our purposes, the importance of the generic orbit ideal is that it is related to generation of $\KK$, by the following key lemma.

\begin{lemma}\label{lem:coeffs-generate}
    Suppose $I$ contains some polynomials $f_1,\dots,f_m\in \KK[X_1,\dots,X_n]$. If $f_1,\dots,f_m$ generate the ideal $I$, then the set of all coefficients of all the $f_j$'s generates $\KK=\kk(V)^G$ as a field extension of $\kk$.
\end{lemma}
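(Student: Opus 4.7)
The plan is to let $L\subset \KK$ be the subfield generated over $\kk$ by all coefficients of the $f_j$'s, and show $L=\KK$ by arguing that the generic orbit descends to a scheme over $L$ whose coordinate ring is so rigid that it already ``sees'' the entire extension $\kk(V)/\kk$. Set $J:=I\cap L[X_1,\dots,X_n]$. Because $I$ and $L[X_1,\dots,X_n]$ are each $G$-stable subsets of $\KK[X_1,\dots,X_n]$, so is $J$; because $J$ contains the $f_j$, which by hypothesis generate $I$ over $\KK$, we have $\KK\cdot J=I$. Let $A:=L[X_1,\dots,X_n]/J$. By flatness of $\KK$ over $L$, base change gives
\[
\KK\otimes_L A \;\cong\; \KK[X_1,\dots,X_n]/I \;\cong\; \kk(V).
\]

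Next I would take $G$-invariants. Since $G$ acts trivially on $\KK$, the functor $(-)^G$ commutes with the flat extension $\KK\otimes_L-$, so $(\KK\otimes_L A)^G=\KK\otimes_L A^G$; the left side is $\kk(V)^G=\KK$. The inclusion $A\hookrightarrow \KK\otimes_L A=\kk(V)$ sending $a\mapsto 1\otimes a$ restricts to an inclusion $A^G\hookrightarrow \kk(V)^G=\KK$, identifying $A^G$ with an $L$-subalgebra of $\KK$. As $L$ is a field, $A^G$ is $L$-free, and the computation $\KK=\KK\otimes_L A^G$ forces $\dim_L A^G=1$; that is, $A^G=L$.

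With $A^G=L$, the classical fact that a ring is integral over its invariants under a finite group action shows $A/L$ is integral. Combined with the finite generation of $A$ as an $L$-algebra by $X_1,\dots,X_n$, this makes $A$ a finite $L$-module, and the isomorphism $\KK\otimes_L A\cong \kk(V)$ gives $\dim_L A=|G|$. Being a finite-dimensional domain (via $A\hookrightarrow \kk(V)$) over a field, $A$ is itself a field. But $A$ contains $L\supseteq \kk$ together with the images of the $X_j$ (namely $x_1,\dots,x_n$), so as a subfield of $\kk(V)=\kk(x_1,\dots,x_n)$ it contains $\kk(x_1,\dots,x_n)$, i.e.\ $A=\kk(V)$. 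Comparing tower degrees gives $[\kk(V):L]=|G|=[\kk(V):\KK]$, hence $L=\KK$.

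The only delicate ingredient is the invariance computation in the second step: verifying that $(-)^G$ genuinely commutes with the flat base change $\KK\otimes_L-$ and that the resulting identification $\KK\otimes_L A^G=\KK$ with $A^G\hookrightarrow \KK$ really does collapse $A^G$ down to $L$. Once this pins $A^G$ to $L$, the remaining steps (integrality, finiteness, becoming a field, and the final degree count) are bookkeeping; and the argument is characteristic-free, which is convenient given that the surrounding subsection does not yet impose $\Char\kk\nmid|G|$.
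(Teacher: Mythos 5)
Your argument is correct, and it takes a genuinely different route from the paper. The paper's proof is computational: it takes a Gr\"obner basis $(f_1,\dots,f_m)$ of $I$, picks an arbitrary $f\in\kk[V]^G$, sets $F\in\KK[X_1,\dots,X_n]$ to be $f$ with $x_i$ replaced by $X_i$ (so $F$ has coefficients in $\kk$), and observes that the reduced normal form of $F$ modulo $I$ is the constant $f\in\KK$; since the division algorithm only performs field operations on the coefficients of $F$ (in $\kk$) and of the $f_j$'s (in $L$), it concludes $f\in L$, hence $\kk[V]^G\subseteq L$ and $\KK=\Frac\kk[V]^G\subseteq L$. Your proof is instead a descent argument: you form $J=I\cap L[X_1,\dots,X_n]$, base-change $A=L[X]/J$ up to $\KK$ to recover $\kk(V)$, use flatness to pull $(-)^G$ through the tensor product and pin down $A^G=L$, then use integrality over invariants and finiteness to conclude $A$ is a field equal to $\kk(V)$, and finish with the degree count $[\kk(V):L]=|G|=[\kk(V):\KK]$. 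All the steps check out: $J$ is $G$-stable and generates $I$ over $\KK$ because the $f_j$ lie in $J$; faithful flatness of $\KK/L$ gives both the injectivity of $A\hookrightarrow\KK\otimes_L A$ and the commutation of invariants with base change; and a finite-dimensional $L$-algebra domain is a field. What each approach buys: the paper's division-algorithm argument is elementary, constructive, and naturally echoes the algorithmic literature of M\"uller-Quade--Beth, Hubert--Kogan, and Kemper that it cites; yours is more conceptual, identifying the essential content as a Galois-descent statement (the defining ideal of the generic orbit cannot descend to a proper subfield of $\KK$) and sidestepping Gr\"obner bases entirely. Both are characteristic-free, consistent with the fact that this lemma is stated before the non-modular hypothesis is imposed.
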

\begin{remark}
Results similar to this lemma appear in~\cite{muller1999calculating,hubert-kogan,kemper2007computation}, where they are used for algorithms to compute generators of the invariant field. M\"uller-Quade and Beth gave an algorithm for generators of the invariant field
that uses the ideal $I$  to compute generators of the field $\KK=\kk(V)^G$ over $\kk$ 
for a linear action of an algebraic group $G$. The main ideas are as follows. Because a priori generators of $\kk(V)^G$ are unknown, we cannot compute in $\kk(V)^G\otimes_\kk \kk[V]$. However, we can view $\kk(V)^G\otimes_\kk \kk[V]\cong \KK[X_1,X_2,\dots,X_n]$ as a subring of $\kk(V)\otimes_\kk \kk[V]=\kk(x_1,x_2,\dots,x_n)[X_1,X_2,\dots,X_n]$.
If $\widetilde{I}$ is the ideal in $\kk(x_1,x_2,\dots,x_n)[X_1,X_2,\dots,X_n]$
generated by $I$ and one computes a reduced Gr\"obner basis of  $\widetilde{I}$ then the coefficients appearing in the Gr\"obner basis will automatically lie in the subfield $\KK$, and by the lemma above they will generate it. Rather than computing in $\kk(x_1,x_2,\dots,x_n)[X_1,X_2,\dots,X_n]$ which would be very inefficient, M\"uller-Quade and Beth compute in the polynomial ring $\kk[x_1,x_2,\dots,x_n,X_1,X_2,\dots,X_n]$ using some additional results. Hubert and Kogan generalized these results to nonlinear polynomial or even rational actions, and they showed in~\cite{hubert-kogan} that one also can use similar ideas to  express arbitrary invariant rational functions in terms of the generators. Kemper generalized this further to algebraic group actions on irreducible varieties~\cite{kemper2007computation}.
\end{remark}

\begin{proof}
    Assume $(f_1,\dots,f_m)=I$. Computing a Gr\"obner basis for $I$ from the $f_j$'s involves only the field operations starting from the coefficients of the $f_j$'s, so if the coefficients of such a Gr\"obner basis generate $\KK$ over $\kk$, then the coefficients of the $f_j$'s already do so. Thus we can assume without loss of generality that $f_1,\dots,f_m$ form a Gr\"obner basis for $I$ (with respect to some term order). 
    
    Let $\KK'$ be the field extension of $\kk$ generated by the coefficients of the $f_j$'s; clearly $\KK'\subseteq \KK$, and our goal is to show that $\KK'=\KK$. It is enough to show that $\KK'$ contains $\kk[V]^G\subseteq \kk(V)^G$, because $\KK = \kk(V)^G= \operatorname{Frac}\kk[V]^G$. 
    
    Now take any element $f\in\kk[V]^G$, replace the $x_i$'s with $X_i$'s, and call it $F\in \KK[X_1,\dots,X_n]$. Note that, by construction, all the coefficients of $F$ actually belong to $\kk$. Then apply the division algorithm to $F$ with respect to the Gr\"obner basis $(f_1,\dots,f_m)$ to compute the unique representative of $F+I$ in $\KK[X_1,\dots,X_n]$ that is reduced with respect to this Gr\"obner basis. 
    
    This representative is $f\in\KK$, i.e., $f$ viewed as a constant polynomial in $\KK[X_1,\dots,X_n]$; we see this as follows. Note that $f\in\KK$ can be viewed either as an element of $\KK[X_1,\dots,X_n]$, i.e., the domain of $\Xi$, or as an element of $\kk(V)$, the codomain; and we have $\Xi(F)=f=\Xi(f)$. Therefore, $F-f\in \ker \Xi = I$, so $f$ represents $F$'s class in $\KK[X_1,\dots,X_n]/I$. Furthermore, as $f$, viewed as an element of $\KK[X_1,\dots, X_n]$, has no nonconstant term, it is reduced with respect to the Gr\"obner basis $(f_1,\dots,f_m)$.
    
    So we can conclude that, applying the division algorithm to $F$, we end up with $f$. But the division algorithm only uses field operations on the coefficients of $F$ and the coefficients appearing in $f_1,\dots,f_m$, so it must be that $f$ is rationally expressible in terms of these coefficients. But all the coefficients of $F$ are in $\kk\subseteq\KK'$, and all the coefficients of $f_1,\dots,f_m$ are in $\KK'$ by construction. Thus, $f\in\KK'$. Since $f\in\kk[V]^G$ was arbitrary, this proves that $\KK'\supset \kk[V]^G$, and thus $\KK'=\KK$.
\end{proof}

\begin{definition}
    Define the {\em generic orbit degree} as
    \[
    D_I := \min(d\in\NN : I\text{ is generated by polynomials of degree}\leq d).
    \]
\end{definition}

By a standard Gr\"obner basis argument, $D_I$ is bounded in terms of $\Dspan$:

\begin{lemma}\label{lem:DI-and-Dspan}
    We have
    \[
    D_I \leq \Dspan + 1.
    \]
\end{lemma}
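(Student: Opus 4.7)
The plan is to exhibit a generating set for $I$ of degree at most $\Dspan + 1$ by building a normal form for the quotient $\KK[X_1,\dots,X_n]/I$ inside $\KK[X_1,\dots,X_n]_{\leq d}$, where $d := \Dspan$. Since by definition of $\Dspan$ the space $\kk[V]_{\leq d}$ spans $\kk(V)$ over $\KK$, I choose polynomials $b_1,\dots,b_{|G|} \in \KK[X_1,\dots,X_n]_{\leq d}$ whose images under $\Xi$ form a $\KK$-basis of $\kk(V)$, and set $B := \bigoplus_i \KK b_i$. Then $\Xi$ restricts to a $\KK$-linear isomorphism $B \xrightarrow{\sim} \kk(V)$; in particular $B \cap I = 0$, and every $g \in \KK[X_1,\dots,X_n]$ admits a unique element $\pi(g) \in B$ with $\Xi(\pi(g)) = \Xi(g)$, so that $g - \pi(g) \in I$. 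The key observation is that for a monomial $X^\alpha$, the element $X^\alpha - \pi(X^\alpha) \in I$ has total degree at most $\max(|\alpha|, d)$.

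Consider the subideal $J := (X^\alpha - \pi(X^\alpha) : |\alpha| \leq d+1) \subseteq I$, whose generators all have degree at most $d+1$. I claim $J = I$, which reduces to proving the congruence $X^\alpha \equiv \pi(X^\alpha) \pmod{J}$ for every multi-index $\alpha$, by induction on $|\alpha|$. The base case $|\alpha| \leq d+1$ is immediate from the definition of $J$. For the inductive step, factor $X^\alpha = X_j X^\beta$ with $|\beta| = |\alpha| - 1$; by induction $X^\beta \equiv \pi(X^\beta) \pmod{J}$, hence $X^\alpha \equiv X_j \pi(X^\beta) \pmod{J}$. The right-hand side is a polynomial of total degree at most $d+1$, so expanding it as a $\KK$-combination of monomials and applying the base case to each, I can reduce it modulo $J$ to an element of $B$; the uniqueness of $\pi$ forces this element to coincide with $\pi(X^\alpha)$. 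Finally, any $f = \sum c_\alpha X^\alpha \in I$ satisfies $f \equiv \sum c_\alpha \pi(X^\alpha) \pmod{J}$, and since the right-hand side lies in $B \cap I = 0$, we conclude $f \in J$ and hence $I = J$.

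The main delicate point is the degree bookkeeping in the inductive step: insisting that each $b_i$ lie in $\KK[X_1,\dots,X_n]_{\leq d}$ rather than merely $\leq d+1$ is precisely what keeps $X_j \pi(X^\beta)$ within the base-case degree threshold of $d+1$. Aside from this, the argument is the normal-form reduction underlying Gröbner bases, and no explicit Gröbner basis computation is actually needed.
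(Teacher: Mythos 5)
Your argument is correct, and it takes a genuinely different, more elementary route than the paper's. The paper fixes a degree-respecting term order, takes a reduced Gr\"obner basis $\mcB$ for $I$, and works with the standard monomial basis $\mcM$ (monomials outside $\init(I)$): it first proves that $\Dspan$ \emph{equals} $\max(\deg m : m\in\mcM)$, then uses the fact that a reduced Gr\"obner basis has leading terms among the minimal generators of $\init(I)$ to bound the degrees of $\mcB$ by $\Dspan + 1$. You instead start from an arbitrary $\KK$-basis $B\subset \KK[X_1,\dots,X_n]_{\leq\Dspan}$ mapping isomorphically onto $\kk(V)$ (available directly from the definition of $\Dspan$), define the linear normal-form projector $\pi = (\Xi|_B)^{-1}\circ\Xi$, and show by induction on $|\alpha|$ that every relation $X^\alpha - \pi(X^\alpha)$ lies in the ideal $J$ generated by such relations in degree $\leq \Dspan+1$. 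The degree bookkeeping is exactly as you flag: since $\pi(X^\beta)\in B$ has degree $\leq\Dspan$, multiplying by one $X_j$ stays within the base-case threshold, so the induction closes; and $\KK$-linearity of $\pi$ together with $B\cap I = 0$ then gives $I\subseteq J$. Your approach avoids Gr\"obner basis machinery entirely (no need to verify that the reduced basis is degree-bounded via the initial ideal), at the modest cost of not also extracting the paper's auxiliary fact that $\Dspan$ equals the top degree of the standard monomial basis for a degree-compatible order. As a side remark, since $\kk[V]_{\leq\Dspan}$ already spans by definition, you may even take the $b_i$ with coefficients in $\kk$ rather than $\KK$, though this is not needed for the argument.
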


\begin{proof}
    Choose a term order that respects degree (for example, graded lexicographic or graded reverse lexicographic order), and let $\mcB := \{f_1,\dots,f_m\}$ be a reduced Gr\"obner basis for $I$ with respect to this term order. By the definition of $D_I$ and the fact that $f_1,\dots,f_m$ generate $I$, we have
    \[
    D_I  \leq \max_j \deg f_j.
    \]
    By Gr\"obner basis theory, the monomials of $\KK[X_1,\dots,X_n]$ in the complement of the initial ideal $\init(I)$ of $I$ represent $I$-residue classes in $\KK[X_1,\dots,X_n]/I\cong \kk(V)$ that  form a basis over $\KK$. Let $\mcM$ be the set of these monomials; it is a finite set because $\kk(V)$ is finite over $\KK$. Let
    \[
    d:=\max(\deg m: m\in \mcM)
    \]
    be the maximum degree of an element of $\mcM$, and let $m^\star\in\mcM$ be some element that achieves this degree. 
    It is clear from the definition of $\Dspan$ that $\Dspan \leq d$, 
    and in fact we have 
    \[
    \Dspan = d,
    \]
    as follows. For any $f$ in the $\KK$-span $\langle \mcM \rangle_\KK$ of $\mcM$, $f$ is the unique representative of its class $f+I$ that is reduced with respect to $\mcB$. Because the chosen term order respects degree, this also means that $f$ is of minimal degree inside $f+I$. In particular, every element of $m^\star + I$ has degree at least $d$. Thus, the $\KK$-vector space $\KK[X_1,\dots,X_n]_{\leq d-1}$ does not contain any representative of $m^\star + I$, and it follows that its image in $\KK[X_1,\dots,X_n]/I \cong \kk(V)$ is a proper $\KK$-subspace. Therefore, $\Dspan > d-1$, and thus $\Dspan=d$.

    Now by the definition of $d$, every monomial of degree at least $d+1$ is contained in $\init(I)$. It follows that $\init(I)$ does not have any minimal generators of degree greater than $d+1$. Since $\mcB$ is reduced, every leading term of every $f_j$ is a minimal generator of $\init(I)$; because the term order respects degree, it follows that no $f_j$ has degree greater than $d+1$. Putting everything together, we have
    \[
    D_I\leq \max_j \deg f_j \leq d+1 = \Dspan + 1,
    \]
    which gives the desired inequality.
\end{proof}

\subsection{A graded normal basis theorem}\label{sec:graded-normal-basis}

The Normal Basis Theorem from Galois theory (e.g., \cite[Theorem~28]{artin-galois}) asserts that $\kk(V)$ has a $\KK=\kk(V)^G$-basis consisting of a single $G$-orbit (i.e., a {\em normal basis}). In other words, as a representation of $G$ over $\KK=\kk(V)^G$, $\kk(V)$ is nothing but the regular representation. In this section, we prove the mild refinement in the present context that the normal basis can be taken to span a graded subspace of $\kk[V]_{\leq \Dspan}$, which we will call $\Vreg$. The statement is Proposition~\ref{prop:normal-basis} below.

We use $\Vreg$ in an essential way in Section~\ref{sec:matrix-eqs} toward the proof of our main result. That section requires the non-modular assumption $\Char \kk\nmid |G|$, in which case, the existence of $\Vreg$ is  straightforward. However, we avoid that assumption in this section in order to give Proposition~\ref{prop:normal-basis} wider applicability; thus the proof is more involved. 

Our proof of  Proposition~\ref{prop:normal-basis} does demand that $\kk$ be algebraically closed (because Lemma~\ref{lem:indecomposable} fails without this assumption). However, ideas discussed here (in particular the Krull-Remak-Schmidt and Noether-Deuring theorems) will also be used below in Section~\ref{sec:spanning-deg-bound} to yield that $\Dreg\leq \Dspan$ without the assumption of algebraic closure. We do not know whether Proposition~\ref{prop:normal-basis} itself holds without this assumption.

As we will work with group algebras, we will need several results about finite dimensional algebras and their finite dimensional modules. In particular we will use some results regarding how algebras and modules behave under base field extension.
In our discussion below we will assume that 
$R$ is a finite dimensional $\kk$-algebra, and all modules are finite dimensional over $\kk$. The main use case will be $R=\kk G$.

For $R$-modules $A$ and $B$ we write $\Hom_R(A,B)$ for the set of $R$-module homomorphisms, $\End_R(A)$ for $\Hom_R(A,A)$, and $1_A\in\End_R(A)$ for the identity map on $A$. By the Krull-Remak-Schmidt Theorem, every module is the direct sum of indecomposable modules, and the isomorphism types of the summands are unique up to permutation. A module $A$ is indecomposable if and only if $\End_R(A)$ is a local ring. Thus, if $A$ is indecomposable then $\End_R(A)$ has a unique maximal ideal ${\mathfrak m}$ such that all elements not in ${\mathfrak m}$ are invertible. The ideal ${\mathfrak m}$ is the Jacobson radical, and consists exactly of all nilpotent elements in $\End_R(A)$.

We first check that a splitting of a surjection onto a projective module guaranteed to exist by projectivity, can be taken to be compatible with any given decomposition of the domain into indecomposables.

\begin{lemma}\label{lem:section}
Suppose that $A$ is a projective $R$-module, $B_1,B_2,\dots,B_\ell$ are indecomposable  $R$-modules, and $\phi=[\phi_1\ \phi_2\ \dots\ \phi_\ell]:\bigoplus_{i=1}^\ell B_i\to A$ is a surjective $R$-module homomorphism. 
Then there exists a  subset ${\mathcal S}\subseteq \{1,2,\dots,\ell\}$ such that the restriction of $\phi$ to $\bigoplus_{i\in {\mathcal S}} B_i$ is an isomorphism onto $A$.
\end{lemma}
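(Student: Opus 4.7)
The plan is to induct on $\dim_\kk A$, at each step peeling off one indecomposable summand of $A$ together with one of the $B_i$'s. The base case $A = 0$ is vacuous with $\mathcal{S} = \emptyset$, so suppose $A \neq 0$. Projectivity of $A$ supplies a section $\sigma: A \to \bigoplus_i B_i$ of $\phi$. Fix a Krull-Remak-Schmidt decomposition $A = A_1 \oplus \cdots \oplus A_k$ with each $A_j$ indecomposable, and let $p_j: A \to A_j$ and $q_j: A_j \to A$ denote the projection and inclusion.

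The core of the argument is to find some index $i_1$ for which $p_1\phi_{i_1}: B_{i_1} \to A_1$ is an isomorphism. Setting $\tau_i := \pi_i\sigma q_1: A_1 \to B_i$ (where $\pi_i: \bigoplus_j B_j \to B_i$ is the projection), the identity $\phi\sigma = 1_A$ gives $\sum_i p_1\phi_i\tau_i = 1_{A_1}$. Since $A_1$ is indecomposable, $\End_R(A_1)$ is local; its maximal ideal $\mathfrak{m}$ is closed under addition and so cannot contain every summand on the left (else it would contain $1_{A_1}$). Pick $i_1$ with $p_1\phi_{i_1}\tau_{i_1} \notin \mathfrak{m}$, hence invertible. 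Then $\tau_{i_1}$ is split injective with left inverse $(p_1\phi_{i_1}\tau_{i_1})^{-1} p_1\phi_{i_1}$, exhibiting $A_1$ as a nonzero direct summand of $B_{i_1}$; indecomposability of $B_{i_1}$ forces $\tau_{i_1}$ to be an isomorphism, and consequently $p_1\phi_{i_1}$ is one as well.

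Set $C := \phi_{i_1}(B_{i_1}) \subseteq A$. Because $p_1\phi_{i_1}$ is an isomorphism, $\phi_{i_1}$ restricts to an isomorphism $B_{i_1} \xrightarrow{\sim} C$, the map $p_1|_C: C \to A_1$ is an isomorphism, and $A = C \oplus \ker(p_1)$; in particular $A/C \cong A_2 \oplus \cdots \oplus A_k$ is a direct summand of $A$, hence projective. Composing $\phi$ with the quotient $A \to A/C$ annihilates the $B_{i_1}$-summand, yielding a surjection $\phi': \bigoplus_{i \neq i_1} B_i \to A/C$ to which the inductive hypothesis applies. It supplies $\mathcal{S}' \subseteq \{1,\ldots,\ell\} \setminus \{i_1\}$ such that $\phi'|_{\bigoplus_{i \in \mathcal{S}'} B_i}$ is an isomorphism onto $A/C$; setting $\mathcal{S} := \mathcal{S}' \cup \{i_1\}$, the restriction of $\phi$ to $\bigoplus_{i \in \mathcal{S}} B_i$ is block upper triangular with respect to the splitting $A = C \oplus (A/C)$ with isomorphisms on the diagonal, hence itself an isomorphism. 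The main obstacle is the step of locating $i_1$: individually, each $p_1\phi_i\tau_i$ may fail to be invertible, so the essential use of indecomposability is that the non-units in $\End_R(A_1)$ form an ideal---this prevents every summand of an expression equal to $1_{A_1}$ from being a non-unit, and is the one place where the Krull-Remak-Schmidt hypothesis on the $B_i$ and $A_j$ enters crucially.
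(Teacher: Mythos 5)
Your proof is correct and follows essentially the same route as the paper's: both arguments locate one $B_{i_1}$ mapping isomorphically onto an indecomposable summand of $A$ by exploiting the locality of $\End_R(A_1)$ applied to the decomposition of $1_{A_1}$ coming from a section, and then recurse on the complementary summand with the remaining $B_i$'s. The only cosmetic difference is that the paper inducts on $\ell$ (treating the case of indecomposable $A$ separately) while you induct on $\dim_\kk A$ and fold that case into the general step; both work equally well.
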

\begin{proof}
We prove the statement by induction on $\ell$. Let $B=\bigoplus_{i=1}^\ell B_i$. The case $\ell=0$ holds vacuously.
Because $A$ is projective, $\phi$ has a section 
$$\psi=\begin{bmatrix}\psi_1\\ \psi_2\\ \vdots\\ \psi_\ell\end{bmatrix}:A\to B,
$$
so that $\phi\psi=\sum_{i=1}^\ell \phi_i\psi_i=1_A$. 

First assume  that $A$ is indecomposable. Now $\End_R(A)$ is a local ring with a unique maximal ideal ${\mathfrak m}$. Since $1_A\not\in {\mathfrak m}$, $\phi_i\psi_i\not\in {\mathfrak m}$ for some $i$. This means that $\phi_i\psi_i$ is an automorphism of $A$. It follows that $\psi_i(A)$ is a direct summand of $B_i$. Because $B_i$ is indecomposable, $\psi_i(A)=B_i$ and $\phi_i:B_i\to A$ is the desired isomorphism.

Now suppose that $A$ is not indecomposable. By the Krull-Remak-Schmidt Theorem, we can write $A=A_1\oplus A_2$ where $A_1$ is indecomposable. Let $\pi_j:A\to A_j$ be the projection for $j=1,2$. From the indecoposable case, we know there exists an $i$ such that $\pi_1\phi_i:B_i\to A_1$ is an isomorphism. By permuting the summands of $B$, we can assume without loss of generality that $\pi_1\phi_1:B_1\rightarrow A_1$ is an isomorphism. It follows that $\phi_1$ is an isomorphism to its image in $A$. Then $\phi_1(\pi_1\phi_1)^{-1}\pi_1\in \End_R(A)$ is an idempotent projection to $\phi_1(B_1)$, and $1_A-\phi_1(\pi_1\phi_1)^{-1}\pi_1$ projects to $\phi_1(\pi_1\phi_1)^{-1}\pi_1$'s kernel, which is an $R$-module complement for $\phi_1(B_1)$. Thus, replacing $A_1$ with $\phi_1(B_1)$, $\pi_1$ with $\phi_1(\pi_1\phi_1)^{-1}\pi_1$, $A_2$ with $\ker \phi_1(\pi_1\phi_1)^{-1}\pi_1$, and $\pi_2$ with $1_A - \phi_1(\pi_1\phi_1)^{-1}\pi_1$, we (still have $\pi_1\phi_1$ an isomorphism, and) may assume without loss of generality that $\pi_2\phi_1=0$.
We write 
\[
B_{\geq 2}=\bigoplus_{i=2}^\ell B_i
\]
and $\phi_{\geq 2}:B_{\geq 2}\to A$  given by 
\[
\phi_{\geq 2} :=
\begin{bmatrix}
    \phi_2 & \dots & \phi_\ell
\end{bmatrix}.
\]
Because
$$
\phi=\begin{bmatrix} \pi_1\\ \pi_2\end{bmatrix}\begin{bmatrix}
    \phi_1 &\phi_{\geq 2} 
    \end{bmatrix}=\begin{bmatrix}
        \pi_1\phi_1 & \pi_1\phi_{\geq 2}\\
        0 & \pi_2\phi_{\geq 2} 
    \end{bmatrix}
$$
is surjective, we get that $\pi_2\phi_{\geq 2}:B_{\geq 2}\to A_2$ is surjective.
The module $A_2$ is a direct summand of the projective module $A$, so it is also projective. By induction, there exists a subset $S'\subseteq \{2,3,\dots,\ell\}$ such that the restriction of $\pi_2\phi_{\geq 2}$ to $\bigoplus_{i\in S'} B_i\to A_2$ is an isomorphism. If we take $S=S'\cup \{1\}$, then the restriction of $\phi$ to $\bigoplus_{i\in S} B_i$ is an isomorphism.
\end{proof}

Let $\FF/\kk$ be an arbitrary field extension. We write $\widetilde{R}=\FF\otimes_\kk R$, and, for an $R$-module $A$, we write $\widetilde{A}=\FF\otimes_\kk A$. Note that
\begin{equation}\label{eq:End-and-base-change}
\End_{\widetilde{R}}(\widetilde{A}) \cong \Hom_R(A,\widetilde{A}) = \Hom_R(A,\FF\otimes_\kk A)\cong \FF\otimes_\kk\End_R(A),
\end{equation}
with the first isomorphism by the restriction-extension adjunction, and the last because $A$ is a finitely presented $R$-module and $\FF$ is flat over $\kk$.

We will need the following classical result which can for example be found in \cite[(29.7), (29.11)]{curtis1966representation}:

\begin{theorem}[Noether-Deuring Theorem]\label{theo:NoetherDeuring}
If $\widetilde{A}$ and $\widetilde{B}$ are isomorphic as $\widetilde{R}$-modules, then $A$ and $B$ are isomorphic $R$-modules.
 \end{theorem}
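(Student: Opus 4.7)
The plan is to translate the statement into a polynomial non-vanishing problem on the $\kk$-vector space $H := \Hom_R(A,B)$. Since $\widetilde{A}\cong\widetilde{B}$ forces $\dim_\FF \widetilde{A}=\dim_\FF\widetilde{B}$, we have $\dim_\kk A=\dim_\kk B$; write $n$ for the common dimension and fix $\kk$-bases identifying each of $A,B$ with $\kk^n$. The same flat-base-change argument as the one producing \eqref{eq:End-and-base-change} gives $\FF\otimes_\kk H \cong \Hom_{\widetilde{R}}(\widetilde{A},\widetilde{B})$. Sitting $H\subseteq \Mat_n(\kk)$ via the chosen bases, an element $\phi\in H$ is an isomorphism if and only if $\det\phi\neq 0$. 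Restricting $\det$ to $H$ yields a polynomial $p\in\kk[H]$; base-changing, $p_\FF$ cuts out the isomorphism locus in $\widetilde{H}:=\FF\otimes_\kk H$.

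\textbf{From an $\widetilde{R}$-isomorphism to an $\overline{\kk}$-isomorphism.} The hypothesis supplies a point of $\widetilde{H}$ at which $p_\FF$ does not vanish, so $p$ is nonzero as a polynomial in $\kk[H]$. I then pass to the algebraic closure $\overline{\kk}$, which is infinite regardless of whether $\kk$ is, and apply the standard fact that a nonzero polynomial on $\overline{\kk}^{\dim_\kk H}$ cannot vanish at every $\overline{\kk}$-point. This produces an isomorphism $\overline{A}\cong\overline{B}$ of $\overline{R}$-modules, where $\overline{R}:=\overline{\kk}\otimes_\kk R$. Its matrix has only finitely many entries, all of them algebraic over $\kk$, so they together lie in some finite subextension $\FF'$ of $\overline{\kk}/\kk$. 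Hence $\FF'\otimes_\kk A\cong \FF'\otimes_\kk B$ as modules over $\FF'\otimes_\kk R$, and a fortiori as $R$-modules.

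\textbf{Descent via Krull-Remak-Schmidt.} Because $\FF'$ is a free $\kk$-module of rank $[\FF':\kk]$, restriction of scalars gives $\FF'\otimes_\kk A\cong A^{[\FF':\kk]}$ and $\FF'\otimes_\kk B\cong B^{[\FF':\kk]}$ as $R$-modules, so the previous isomorphism reads $A^{[\FF':\kk]}\cong B^{[\FF':\kk]}$. Since $A$ and $B$ are finite-dimensional over $\kk$ they have finite length as $R$-modules, so Krull-Remak-Schmidt applies: decomposing $A\cong\bigoplus_i A_i^{a_i}$ and $B\cong\bigoplus_i A_i^{b_i}$ over a common indexing of pairwise non-isomorphic indecomposables, the isomorphism $A^{[\FF':\kk]}\cong B^{[\FF':\kk]}$ forces $a_i[\FF':\kk]=b_i[\FF':\kk]$ for every $i$, whence $a_i=b_i$ and $A\cong B$.

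\textbf{Main obstacle.} The only real subtlety I foresee is the case of finite $\kk$, where nonvanishing of $p$ as a polynomial does not by itself guarantee a $\kk$-rational isomorphism. The detour through $\overline{\kk}$ is the standard remedy: it secures at least a geometric isomorphism, and descent to a finite subextension then converts the problem into a cancellation statement about multiplicities of indecomposable summands, which Krull-Remak-Schmidt resolves cleanly. Everything else, including the Hom base-change isomorphism and the polynomial characterization of isomorphisms, is routine given the setup already established in the excerpt.
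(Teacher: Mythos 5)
Your proof is correct. The paper does not itself prove this classical theorem — it cites Curtis and Reiner — and the argument you give (realize the isomorphism locus as the non‑vanishing set of the determinant polynomial on $\Hom_R(A,B)$ to handle infinite $\kk$, then for finite $\kk$ pass to $\overline{\kk}$, descend to a finite subextension $\FF'$, and apply Krull--Remak--Schmidt cancellation to $A^{[\FF':\kk]}\cong B^{[\FF':\kk]}$) is essentially the standard proof given in that reference.
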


We also need the fact that base changing from an algebraically closed field preserves indecomposability:

\begin{lemma}\label{lem:indecomposable}
Suppose that $\kk$ is algebraically closed.
   If $A$ is an indecomposable $R$-module, then $\widetilde{A}$ is an indecomposable $\widetilde{R}$-module.
\end{lemma}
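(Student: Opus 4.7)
The plan is to use the characterization mentioned in the paragraph preceding the lemma: a finite-dimensional module is indecomposable if and only if its endomorphism ring is local. Starting from $A$ indecomposable, $\End_R(A)$ is a finite-dimensional local $\kk$-algebra whose Jacobson radical $J$ consists precisely of its nilpotent elements. I would then exploit algebraic closure to pin down the residue field, and finally track locality through the base change using \eqref{eq:End-and-base-change}.

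The residue field $\End_R(A)/J$ is a finite-dimensional division $\kk$-algebra. For any element $d$ in it, the subring $\kk[d]$ is commutative (since $\kk$ is central) and has no zero divisors, so the minimal polynomial of $d$ over $\kk$ is irreducible; since $\kk$ is algebraically closed, irreducible polynomials over $\kk$ are linear, forcing $d \in \kk$. Thus $\End_R(A)/J \cong \kk$, and we obtain a $\kk$-vector space decomposition
\[
\End_R(A) = \kk \cdot 1_A \oplus J,
\]
with $J$ a nilpotent two-sided ideal.

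Base changing via \eqref{eq:End-and-base-change}, I then obtain
\[
\End_{\widetilde{R}}(\widetilde{A}) \cong \FF \otimes_\kk \End_R(A) \cong \FF \cdot 1_{\widetilde{A}} \oplus (\FF \otimes_\kk J).
\]
Since $J^n = 0$ for some $n$, also $(\FF \otimes_\kk J)^n = 0$, so $\FF \otimes_\kk J$ is a nilpotent two-sided ideal whose quotient in $\End_{\widetilde{R}}(\widetilde{A})$ is the field $\FF$. Therefore every element of $\End_{\widetilde{R}}(\widetilde{A})$ is either a unit (if its image in $\FF$ is nonzero) or nilpotent (if it lies in $\FF \otimes_\kk J$); in particular $\End_{\widetilde{R}}(\widetilde{A})$ is local, and $\widetilde{A}$ is indecomposable.

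The only real obstacle is the input that a finite-dimensional division $\kk$-algebra over an algebraically closed field equals $\kk$ itself. Without algebraic closure, $\End_R(A)/J$ could be a proper finite extension of $\kk$, and tensoring with an $\FF$ that splits this extension would introduce nontrivial idempotents in $\End_{\widetilde{R}}(\widetilde{A})$, breaking locality; this is precisely why the hypothesis on $\kk$ cannot be dropped.
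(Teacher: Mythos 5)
Your proof is correct and follows essentially the same route as the paper: show $\End_R(A)/J \cong \kk$ using algebraic closure, then base change and observe that every element of $\End_{\widetilde R}(\widetilde A)$ is either a unit or nilpotent because $\widetilde J = \FF\otimes_\kk J$ stays nilpotent while the quotient is the field $\FF$. The only cosmetic difference is that you present the splitting as a vector-space direct sum $\kk\cdot 1_A\oplus J$ (and spell out the Frobenius-style argument that a finite division $\kk$-algebra over algebraically closed $\kk$ equals $\kk$), where the paper phrases the same step via the short exact sequence $0\to\mathfrak m\to T\to\kk\to 0$ and its base change.
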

\begin{proof}
Since $A$ is indecomposable, the algebra $T=\End_R(A)$ is a local ring with a unique maximal ideal ${\mathfrak m}$. The quotient $T/{\mathfrak m}$ is a skew-field and a finite dimensional $\kk$-algebra. Because $\kk$ is algebraically closed, 
$T/{\mathfrak m}\cong \kk$.
If we apply base field extension from $\kk$ to $\FF$ for the exact sequence
$$
\xymatrix{
0\ar[r] & {\mathfrak m}\ar[r] &  T \ar[r] & \kk\ar[r] & 0}
$$
we get an exact sequence
$$
\xymatrix{
0\ar[r] & \widetilde{\mathfrak m}\ar[r] &  \widetilde{T} \ar[r] & \FF\ar[r] & 0}
$$
of $\widetilde{T}$-modules. Since the quotient $\widetilde{T}/\widetilde{\mathfrak m}$ is isomorphic to the field $\FF$, every element of $\widetilde{T}$ has the form $\alpha 1_{\widetilde A} + m$ for $\alpha\in \FF$ and $m\in \widetilde{\mathfrak m}$. Because ${\mathfrak m}$ is a nilpotent ideal, $\widetilde{\mathfrak m}$ is also nilpotent, and in particular, $m$ is nilpotent. So $\alpha 1_{\widetilde A} + m$ is invertible provided $\alpha\neq 0$. Thus $\widetilde{T} \setminus \widetilde{\mathfrak m}$ consists of units. This proves that $\widetilde{\mathfrak m}$ is the unique maximal ideal of $\widetilde{T}$ and that $\widetilde{T}$ is a local ring. Because $\widetilde{T}\cong \End_{\widetilde{R}}(\widetilde{A})$ by \eqref{eq:End-and-base-change}, it follows that $\widetilde{A}$ is an indecomposable $\widetilde{R}$-module. 
 \end{proof}

We are ready to prove this section's main result.

\begin{proposition}[Graded normal basis theorem]\label{prop:normal-basis}
Suppose that $\kk$ is algebraically closed. 
    There exists a $\kk G$-module direct summand $\Vreg$ of $\kk[V]_{\leq \Dspan}$ that is isomorphic to the regular representation of $G$ over $\kk$, is graded as a $\kk$-subspace of $\kk[V]_{\leq \Dspan}$, and such that any $\kk$-basis for it is also a $\kk(V)^G$-basis for $\kk(V)$.
\end{proposition}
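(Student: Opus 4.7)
The plan is to extract $\Vreg$ from $\kk[V]_{\leq \Dspan}$ by combining the classical Normal Basis Theorem with the Krull--Remak--Schmidt toolkit (Lemmas~\ref{lem:section} and~\ref{lem:indecomposable} together with Theorem~\ref{theo:NoetherDeuring}) just developed. First I would decompose each homogeneous piece $\kk[V]_d$, viewed as a $\kk G$-submodule, separately as a direct sum of indecomposable $\kk G$-modules, and concatenate these to obtain a decomposition $\kk[V]_{\leq \Dspan} = \bigoplus_i W_i$ in which each $W_i$ is indecomposable \emph{and} concentrated in a single degree. This is the graded refinement of an arbitrary Krull--Remak--Schmidt decomposition, and it is the reason for working one homogeneous piece at a time.

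Next, consider the natural $\KK G$-linear map
\[
\Phi : \KK \otimes_\kk \kk[V]_{\leq \Dspan} \longrightarrow \kk(V)
\]
obtained by $\KK$-linearly extending the inclusion $\kk[V]_{\leq \Dspan} \hookrightarrow \kk(V)$. By the very definition of $\Dspan$, the map $\Phi$ is surjective. The classical Normal Basis Theorem applied to the Galois extension $\kk(V)/\KK$ identifies $\kk(V) \cong \KK G$ as $\KK G$-modules, so $\kk(V)$ is free and in particular projective. Because $\kk$ is algebraically closed, Lemma~\ref{lem:indecomposable} guarantees each $\KK \otimes_\kk W_i$ is an indecomposable $\KK G$-module. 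Lemma~\ref{lem:section} then applies to $\Phi$ and produces a subset $\mathcal{S}$ of the indexing set such that $\Phi$ restricts to an isomorphism $\bigoplus_{i \in \mathcal{S}} \KK \otimes_\kk W_i \xrightarrow{\ \sim\ } \kk(V)$ of $\KK G$-modules.

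Set $\Vreg := \bigoplus_{i \in \mathcal{S}} W_i \subseteq \kk[V]_{\leq \Dspan}$. It is a $\kk G$-module direct summand by construction, and is graded as a $\kk$-subspace because each $W_i$ is homogeneous. From $\KK \otimes_\kk \Vreg \cong \kk(V) \cong \KK \otimes_\kk \kk G$ as $\KK G$-modules and Theorem~\ref{theo:NoetherDeuring}, I conclude $\Vreg \cong \kk G$ as $\kk G$-modules. Finally, any $\kk$-basis of $\Vreg$ becomes, after tensoring with $\KK$, a $\KK$-basis of $\KK \otimes_\kk \Vreg$; pushing it through the isomorphism $\Phi|_{\KK \otimes_\kk \Vreg}$ exhibits it as a $\KK$-basis of $\kk(V)$.

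The main obstacle is requiring $\Vreg$ to be simultaneously graded \emph{and} isomorphic to the regular representation. The graded condition is handled directly, by insisting on a degree-by-degree Krull--Remak--Schmidt decomposition; the isomorphism type, which one cannot read off from the construction alone, is recovered \emph{a posteriori} via Noether--Deuring, after establishing the desired isomorphism over $\KK$ using the classical Normal Basis Theorem.
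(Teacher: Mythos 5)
Your proposal is correct and follows essentially the same route as the paper: both decompose $\kk[V]_{\leq \Dspan}$ into indecomposables each concentrated in a single degree, apply Lemma~\ref{lem:indecomposable} and Lemma~\ref{lem:section} to the base-changed surjection onto $\kk(V)\cong\KK G$ to extract a graded summand mapping isomorphically, and then invoke Theorem~\ref{theo:NoetherDeuring} to descend the isomorphism with the regular representation to $\kk$. The only cosmetic difference is that you explicitly cite the classical Normal Basis Theorem where the paper writes $\kk(V)\cong\widetilde{R}$ without naming it.
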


\begin{proof}
Let $R=\kk G$ and $\FF=\KK$, so that $\widetilde{R}=\KK G$.
We can write $\kk[V]_{\leq \Dspan}$ as a direct sum of indecomposable $R$-modules, with each summand contained in a homogeneous component: 
$$
\kk[V]_{\leq \Dspan}= B_1\oplus B_2\oplus \cdots \oplus B_s.
$$
 Restriction of $\Xi$ gives a surjective $\widetilde{R}$-module homomorphism
$$\bigoplus_{i=1}^r \widetilde{B}_i=\KK\otimes_\kk \kk[V]_{\leq \Dspan}\rightarrow \kk(V)\cong \widetilde{R}.
$$
Since $\widetilde{R}$ is a projective  $\widetilde{R}$-module, 
and $\widetilde{B}_i$ is an indecomposable $\widetilde{R}$-module for all $i$ by Lemma~\ref{lem:indecomposable},  we can apply  Lemma~\ref{lem:section}. So there exists a subset $S\subseteq \{1,2,\dots,s\}$ such that the restriction of $\Xi$ to $\bigoplus_{i\in S}\widetilde{B}_i$ is a $\widetilde{R}$-module isomorphism. 
We take $V_{\rm reg}=\bigoplus_{i\in S}B_i$. 
Then $\widetilde{V}_{\rm reg}=\KK\otimes_{\kk}V_{\rm reg}\cong \widetilde{R}\cong \kk(V)$. This implies that any $\kk$-basis of $V_{\rm reg}$ will be a $\KK$-basis of $\KK G\cong \kk(V)$.
Also, by Theorem~\ref{theo:NoetherDeuring}, $\widetilde{V}_{\rm reg}\cong \widetilde{R}$ implies that $V_{\rm reg}$ is isomorphic to $R$ as an $R$-module.
\end{proof}

\subsection{Matrix equations for coefficients}\label{sec:matrix-eqs}

This subsection is the heart of the argument. We split the low-degree components of $\kk[X_1,\dots,X_n]$ into irreducible subrepresentations of $G$ and write down $\kk$-bases for these subrepresentations. We then view the elements of $I$ of low degree as linear combinations of these basis elements with coefficients in $\KK = \kk(V)^G$, and use linear algebra to find the coefficients appearing in these linear combinations in terms of invariants of low degree. It is here (and in the following section) that we instate, and make heavy use of, the assumption that  $\operatorname{char}\kk$ does not divide the order of $G$. 

We also assume that $\kk$ is algebraically closed. Per Lemma~\ref{lem:base-change}, this costs no generality.

Let $\Irr_\kk(G)$ be an index set for the isomorphism classes of irreducible representations of $G$ over $\kk$. For each $\lambda\in \Irr_\kk(G)$, fix a particular representation $V_\lambda$, of dimension $d_\lambda$, and a specific $\kk$-basis
\begin{equation}\label{eq:basis-for-Vlambda}
v_1^\lambda,\dots,v_{d_\lambda}^\lambda \in V_\lambda.
\end{equation}
Now we break the graded components of the polynomial ring $\kk[V]$ into irreducible subrepresentations of $G$. We know by Proposition~\ref{prop:normal-basis} that we can find enough such irreducible subrepresentations in degree $\leq \Dspan$ to constitute a copy $\Vreg$ of the regular representation, with the property that a $\kk$-basis for it is also a $\KK$-basis for $\kk(V)$; we have
\begin{equation}\label{eq:base-change-reg}
\KK\otimes_\kk \Vreg \cong \kk(V)
\end{equation}
as $G$-representations over $\KK$, with the isomorphism given by restricting the multiplication map $\Xi$.

Because we have assumed that $\kk$ is algebraically closed, the multiplicity of each $V_\lambda$ in $\Vreg$ is also the degree $d_\lambda:=\dim_\kk V_\lambda$. For each $\lambda\in\Irr_\kk(G)$, we fix a $\kk$-basis of $G$-module embeddings of $V_\lambda$ in $\Vreg$:
\begin{equation}\label{eq:psis}
    \psi_1^\lambda,\dots,\psi_{d_\lambda}^\lambda\in\Hom_{\kk G}(V_\lambda,\Vreg).
\end{equation}
We can take each $\psi^\lambda_j$ to have image contained in a single graded component of $\Vreg$.

\begin{observation}\label{obs:K-basis}
    The $\psi_1^\lambda,\dots,\psi_{d_\lambda}^\lambda$ also form a $\KK=\kk(V)^G$-basis for
    \[
    \Hom_{\kk G}(V_\lambda, \kk(V))
    \]
    with respect to the $\KK$-action on $\Hom_{\kk G}(V_\lambda,\kk(V))$ induced from multiplication in $\kk(V)$.
\end{observation}

\begin{proof}
    We have 
    \[
    \Hom_{\kk G}(V_\lambda, \kk(V))\cong \Hom_{\kk G}(V_\lambda, \KK \otimes_\kk \Vreg)\cong \KK \otimes_\kk \Hom_{\kk G}(V_\lambda, \Vreg),
    \]
    with the first isomorphism by \eqref{eq:base-change-reg}, and the second because $V_\lambda$ is finitely presented projective as a $\kk G$-module. Since the isomorphism in \eqref{eq:base-change-reg} is the multiplication map, the action of $\KK$ on $\Hom_{\kk G}(V_\lambda,\kk(V))$ is the one induced from multiplication in $\kk(V)$. The statement follows.
\end{proof}

It is then immediate that the $|G| = \sum_{\lambda\in \Irr_\kk(G)} d_\lambda^2$ elements
\begin{equation}\label{eq:basis-for-Vreg}
\psi_i^\lambda\left(v_j^\lambda\right)
\end{equation}
of $\Vreg\subseteq \kk[V]$, for $\lambda\in \Irr_\kk(G)$ and $1\leq i,j\leq d_\lambda$, form a $\kk$-basis of $\Vreg$ and thus also a $\KK$-basis of $\kk(V)$. All of these elements of $\kk[V]$ are homogeneous of degree at most $\Dspan$.

We now play the same game for $\kk[V]_{\leq D_I}$, decomposing each of its graded components into irreducible $G$-representations, yielding a direct-sum decomposition
\[
\kk[V]_{\leq D_I} = \bigoplus_{s\in S} W_s,
\]
where $S$ is a finite set indexing this decomposition. For each $s\in S$, there is a unique $\lambda\in \Irr_\kk(G)$, which we denote $\lambda(s)$, such that $W_s\cong V_\lambda$ as $\kk G$-modules, and we fix a $\kk G$-isomorphism
\[
\phi_s:V_{\lambda(s)} \rightarrow W_s
\]
accordingly. Then the elements
\begin{equation}\label{eq:basis-for-leqDI}
\left\{\phi_s\left(v^{\lambda(s)}_j\right)\right\}_{s\in S,\; 1\leq j \leq d_{\lambda(s)}}
\end{equation}
form a $\kk$-basis of $\kk[V]_{\leq D_I}$, and so also a $\KK$-basis of $\KK\otimes_\kk \kk[V]_{\leq D_I}= \KK[X_1,\dots,X_n]_{\leq D_I}$. Each of them is a homogeneous polynomial of degree at most $D_I$, which we view as a polynomial in the $x_i$ when working inside $\kk(V)$, and in the $X_i$ when working inside $\KK[X_1,\dots,X_n]$.

Our plan (carried out below and in Section~\ref{sec:assemble-proof}) is to write a generating set for $I$ in terms of this basis. By definition of $D_I$, $I$ is generated by its elements of degree $\leq D_I$; this is the target generating set. Viewing the elements \eqref{eq:basis-for-leqDI} as belonging to the second tensor factor in the domain of \eqref{eq:Xi}, an element of $I$ of degree $\leq D_I$ can be written as a $\KK=\kk(V)^G$-linear combination of the elements \eqref{eq:basis-for-leqDI}. Explicitly, in the notation of \eqref{eq:Xi-X}, we view the elements \eqref{eq:basis-for-leqDI} as homogeneous polynomials over $\kk$ in the $X_i$, and because they form a $\KK$-basis for $\KK[X_1,\dots,X_n]_{\leq D_I}$, any element of $I$ of degree $\leq D_I$ can be written as a $\KK$-linear combination of them which becomes zero after substituting $X_i\mapsto x_i$. We aim to show that the coefficients that appear in such a linear combination can be calculated, via the field operations, from elements of $\kk[V]^G$ whose degrees are controlled. Via Lemma~\ref{lem:coeffs-generate}, this will yield the desired result.

The strategy for writing these coefficients in terms of elements of $\kk[V]^G$ of controlled degree, is to represent the elements \eqref{eq:basis-for-leqDI} in terms of the $\KK$-basis \eqref{eq:basis-for-Vreg} for $\kk(V)$. The $\KK$-coordinates of each element \eqref{eq:basis-for-leqDI} in terms of the basis \eqref{eq:basis-for-Vreg} will be calculated via a matrix equation where the entries in the matrices have degree controlled in terms of $D_I$ and $\Dspan$. Then, the $\KK$-linear combinations of the elements \eqref{eq:basis-for-leqDI} which become trivial after the substitution $X_i\mapsto x_i$ (i.e., the elements of $I$ of low degree) can be calculated using elementary linear algebra from these coordinates. The details follow. The first step is to make sure that some matrices that appear in the matrix equations are nonsingular.

Recall the {\em Reynolds operator}
\begin{align*}
    \Reynolds: \kk[V]&\rightarrow \kk[V]^G\\
     f&\mapsto \frac{1}{|G|}\sum_{g\in G} gf.
\end{align*}
Note that $\Reynolds$ preserves degree, and also extends naturally to $\kk(V)\rightarrow \kk(V)^G$, so that 
\[
\Reynolds(f) = |G|^{-1}\Tr_{\kk(V)/\KK}(f),
\]
where $\Tr$ is the trace in the sense of field theory.

\begin{lemma}\label{lem:nonsingular}
    Fix $\mu\in \Irr_\kk(G)$, and $k$ with $1\leq k \leq d_\mu$ (indexing a specific $v^\mu_k\in V_\mu$ as in \eqref{eq:basis-for-Vlambda}). Then there exist $d_\mu$ homogeneous polynomials $h_1,\dots, h_{d_\mu} \in \kk[V]$, of degree at most $\Dspan$, such that the $d_\mu \times d_\mu$ matrix
    \[
    \begin{pmatrix}
        \Reynolds\left(h_1\psi^\mu_1\left(v^\mu_k\right)\right)& \dots & \Reynolds\left(h_1\psi^\mu_{d_\mu}\left(v^\mu_k\right)\right) \\
        \vdots & \ddots & \vdots \\
        \Reynolds\left(h_{d_\mu}\psi^\mu_1\left(v^\mu_k\right)\right)& \dots & \Reynolds\left(h_{d_\mu}\psi^\mu_{d_\mu}\left(v^\mu_k\right)\right)
    \end{pmatrix},
    \]
    with entries in $\kk[V]^G\subseteq \KK$, is nonsingular.
\end{lemma}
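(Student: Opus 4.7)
The plan is to realize the target matrix as the matrix of a bilinear pairing and exploit nondegeneracy of the trace form in the non-modular case. Let $W_k \subseteq \kk(V)$ be the $\KK$-span of $\psi^\mu_1(v^\mu_k), \dots, \psi^\mu_{d_\mu}(v^\mu_k)$. These elements sit inside the $\KK$-basis $\{\psi^\mu_i(v^\mu_j)\}_{i,j}$ of $\kk(V)$ supplied by Observation~\ref{obs:K-basis} and Proposition~\ref{prop:normal-basis}, so they are $\KK$-linearly independent and $W_k$ has $\KK$-dimension $d_\mu$. Consider the $\KK$-bilinear form $B \colon \kk(V) \times W_k \to \KK$ given by $B(h, w) = \Reynolds(hw)$; the $(i,j)$-entry of the target matrix is exactly $B(h_i, \psi^\mu_j(v^\mu_k))$. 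So the goal becomes to exhibit homogeneous $h_1, \dots, h_{d_\mu} \in \kk[V]_{\leq \Dspan}$ whose images under the adjoint $B^\flat \colon \kk(V) \to W_k^*$ form a $\KK$-basis of $W_k^*$.

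The crucial step is to prove that $B^\flat$ is surjective. Here is where the hypothesis $\Char \kk \nmid |G|$ enters essentially: the Galois extension $\kk(V)/\KK$ is then separable, so its trace form $(a, b) \mapsto \Tr_{\kk(V)/\KK}(ab)$ is nondegenerate, and since $|G|$ is a unit in $\KK$ we equivalently get nondegeneracy of the Reynolds pairing $(a, b) \mapsto \Reynolds(ab)$ on $\kk(V) \times \kk(V)$. That pairing's adjoint is a $\KK$-linear isomorphism $\kk(V) \xrightarrow{\sim} \kk(V)^*$; composing with the restriction map $\kk(V)^* \twoheadrightarrow W_k^*$, which is surjective because $W_k \hookrightarrow \kk(V)$, recovers $B^\flat$. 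Thus $B^\flat$ is surjective.

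By the definition of $\Dspan$, $\kk[V]_{\leq \Dspan}$ $\KK$-spans $\kk(V)$, so $B^\flat(\kk[V]_{\leq \Dspan})$ $\KK$-spans $W_k^*$. Writing $\kk[V]_{\leq \Dspan} = \bigoplus_{d=0}^{\Dspan} \kk[V]_d$ and using additivity of $B^\flat$, the union $\bigcup_{d \leq \Dspan} B^\flat(\kk[V]_d) \subseteq W_k^*$ also $\KK$-spans $W_k^*$. Hence we can extract $h_1, \dots, h_{d_\mu}$ from this union, each $h_i$ homogeneous of some degree $\leq \Dspan$, with $B^\flat(h_1), \dots, B^\flat(h_{d_\mu})$ a $\KK$-basis of $W_k^*$. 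The matrix $(\Reynolds(h_i \psi^\mu_j(v^\mu_k)))_{i,j}$ is then nonsingular over $\KK$, and since $\Reynolds$ carries $\kk[V]$ into $\kk[V]^G$, its entries lie in $\kk[V]^G$, as required. The only step requiring real thought is the nondegeneracy of the Reynolds pairing; everything else is formal manipulation of the setup provided by the graded normal basis theorem.
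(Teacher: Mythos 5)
Your proposal is correct and takes essentially the same approach as the paper: both hinge on the nondegeneracy of the Reynolds pairing $(a,b)\mapsto\Reynolds(ab)$ on $\kk(V)\times\kk(V)$, which follows from separability of the Galois extension $\kk(V)/\KK$ in the non-modular case, together with the fact that $\kk[V]_{\leq\Dspan}$ spans $\kk(V)$ over $\KK$. The only cosmetic difference is that the paper works with the Gram matrix of the full pairing in the basis $\{\psi^\lambda_i(v^\lambda_j)\}$ and extracts a nonsingular $d_\mu\times d_\mu$ submatrix (so the $h_i$ come out to be specific $\psi^\nu_i(v^\nu_j)$'s, which yields the remark following the lemma), whereas you phrase the same argument via the adjoint map $\kk(V)\to W_k^*$ and extract homogeneous $h_i$ directly from the spanning set $\kk[V]_{\leq\Dspan}$.
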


\begin{remark}
    The proof shows that $h_1,\dots, h_{d_\mu}$ can actually be chosen from among the $\psi^{\nu}_j\left(v^{\nu}_\ell\right)$, $\nu \in \Irr_\kk(G)$, $1\leq j,\ell \leq d_{\nu}$, and a close read (with Schur's lemma in mind) shows that $\nu$ must be dual to $\mu$.
\end{remark}

\begin{proof}
    The field extension $\kk(V) / \KK$, being Galois (with group $G$), is separable. Therefore, the trace form
    \begin{align*}
        \Tr: \kk(V) \times \kk(V) &\rightarrow \KK \\
        (f_1,f_2) &\mapsto \Tr_{\kk(V)/\KK}(f_1f_2)
    \end{align*}
    is nondegenerate. Since the bilinear form $B:\kk(V)\times \kk(V) \rightarrow \KK$ given by $(f_1,f_2)\mapsto \Reynolds(f_1f_2)$ differs from the trace form by a scalar, it too is nondegenerate.

    Because the $\psi^\lambda_i\left(v^\lambda_j\right)$ form a basis for $\kk(V)$ over $\KK$ as $\lambda$ varies in $\Irr_\kk(G)$ and $i,j$ vary in $1,\dots,d_\lambda$, the symmetric matrix
    \[
    \begin{pmatrix}
        \Reynolds\left(\psi^{\nu}_{i}\left(v^{\nu}_{j}\right) \psi^{\lambda}_{\ell}\left(v^{\lambda}_{m}\right)\right)
    \end{pmatrix}_{\substack{\nu,\lambda\in\Irr_\kk(G) \\ 1\leq i,j\leq d_{\nu} \\ 1\leq \ell,m\leq d_{\lambda}}}
    \]
    represents $B$ with respect to this basis. Since $B$ is nondegenerate, this matrix is nonsingular. In particular, focusing on the columns with $\lambda = \mu$, $m=k$, and $\ell=1,\dots, d_\mu$, they must be $\KK$-linearly independent. Thus there must be $d_\mu$ corresponding rows, for some choices of $\nu, i, j$, so that the resulting $d_\mu \times d_\mu$ submatrix is nonsingular. We take $h_1,\dots,h_{d_\mu}$ to be $\psi^{\nu}_{i}\left(v^{\nu}_{j}\right)$ for these choices of $\nu, i, j$.
\end{proof}

Let $\KK_\mathrm{low}$ be the subfield of $\KK$ generated over $\kk$ by the invariant polynomials of degree at most $\max(\Dspan + D_I, 2\Dspan)$:
\[
\KK_\mathrm{low} := \kk\left(\kk[V]^G_{\leq \max(\Dspan + D_I, 2\Dspan)}\right)\subseteq \KK.
\]
The following lemma uses the matrix in Lemma~\ref{lem:nonsingular} to show that the $\KK$-coordinates of any element \eqref{eq:basis-for-leqDI}, when written with respect to our basis \eqref{eq:basis-for-Vreg} for $\Vreg$, are contained in $\KK_\mathrm{low}$. This is the core of the argument. (It will eventually lead us to the conclusion that $\KK_\mathrm{low}=\KK$, but we of course do not assume this.)

Recall that $S$ is an index set for the irreducible subrepresentations $W_s$ of $\kk[V]_{\leq D_I}=\bigoplus_{s\in S} W_s$, and thereby also for the embeddings $\phi_s:V_{\lambda(s)}\rightarrow W_s \subseteq \kk[V]$.

\begin{lemma}\label{lem:matrix-equation}
    Take any $s\in S$. For brevity, write $\lambda := \lambda(s)$, and take any $j=1,\dots,d_\lambda$. Then we have a $\KK$-linear relation 
    \begin{equation}\label{eq:lc}
    \phi_s\left(v^{\lambda}_j\right) = a_1\psi_1^{\lambda}\left(v^{\lambda}_j\right) + \dots + a_{d_{\lambda}} \psi_{d_{\lambda}}^{\lambda}\left(v^{\lambda}_j\right)
    \end{equation}
    in $\kk(V)$, and in fact, the coefficients $a_1,\dots,a_{d_{\lambda}}$ belong to $\KK_\mathrm{low}$.
\end{lemma}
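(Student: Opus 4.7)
The plan is to first establish the existence of the coefficients $a_i$ using Observation~\ref{obs:K-basis}, and then pin down that they lie in $\KK_\mathrm{low}$ by solving an explicit linear system whose matrix is provided by Lemma~\ref{lem:nonsingular}.

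For existence, note that $\phi_s$, viewed as a $\kk G$-module homomorphism $V_\lambda \to \kk[V] \hookrightarrow \kk(V)$, belongs to $\Hom_{\kk G}(V_\lambda, \kk(V))$. By Observation~\ref{obs:K-basis}, the $\psi_1^\lambda, \ldots, \psi_{d_\lambda}^\lambda$ form a $\KK$-basis of this Hom space, so there exist $a_1, \ldots, a_{d_\lambda} \in \KK$ with $\phi_s = \sum_{i=1}^{d_\lambda} a_i\,\psi_i^\lambda$ as $\kk G$-module maps from $V_\lambda$ to $\kk(V)$. Evaluating at $v_j^\lambda$ gives the relation \eqref{eq:lc}.

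To prove that the $a_i$ lie in $\KK_\mathrm{low}$, I apply Lemma~\ref{lem:nonsingular} with $\mu = \lambda$ and $k = j$, obtaining homogeneous polynomials $h_1, \ldots, h_{d_\lambda} \in \kk[V]$ of degree at most $\Dspan$ such that the matrix
\[
M := \bigl(\Reynolds(h_p\,\psi_q^\lambda(v_j^\lambda))\bigr)_{1\le p,q\le d_\lambda}
\]
is nonsingular. Multiply both sides of \eqref{eq:lc} by $h_p$ and apply $\Reynolds$. Because each $a_i \in \KK = \kk(V)^G$, the Reynolds operator on $\kk(V)$ is $\KK$-linear, and the coefficients pass through, yielding the linear system $Ma = c$, where $a = (a_1, \ldots, a_{d_\lambda})^T$ and $c = (c_p)$ with $c_p = \Reynolds(h_p\,\phi_s(v_j^\lambda))$.

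It remains to bound degrees. The entries of $M$ are invariant polynomials of degree at most $\Dspan + \Dspan = 2\Dspan$ (since both $h_p$ and $\psi_q^\lambda(v_j^\lambda)$ lie in $\kk[V]_{\le \Dspan}$ and $\Reynolds$ preserves degree), while the entries of $c$ are invariants of degree at most $\Dspan + D_I$. Hence all entries of $M$ and $c$ lie in $\kk[V]^G_{\le \max(2\Dspan,\,\Dspan + D_I)} \subset \KK_\mathrm{low}$. Since $M$ is invertible over $\KK$ and $\KK_\mathrm{low}$ is a field, Cramer's rule (or equivalently solving $a = M^{-1}c$) shows $a_i \in \KK_\mathrm{low}$ for each $i$. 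The main subtlety, and really the only non-mechanical point, is verifying that the Reynolds operator commutes with scalar multiplication by elements of $\KK$ so that the passage from \eqref{eq:lc} to $Ma = c$ is legitimate; but this is immediate from $G$-invariance of elements of $\KK$.
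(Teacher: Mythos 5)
Your proof is correct and takes essentially the same route as the paper: existence of the coefficients via Observation~\ref{obs:K-basis}, then multiplying by the $h_p$ from Lemma~\ref{lem:nonsingular}, applying $\Reynolds$, and solving the resulting nonsingular linear system, with the degree bounds $2\Dspan$ and $\Dspan + D_I$ placing all entries in $\KK_\mathrm{low}$. The paper writes the system out explicitly rather than invoking Cramer's rule by name, but the content is identical.
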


\begin{proof}
    Note that $\phi_s$ belongs to $\Hom_{\kk G}(V_\lambda,\kk(V))$. By observation~\ref{obs:K-basis}, $\psi_1^\lambda,\dots,\psi_{d_\lambda}^\lambda$ form a $\KK$-basis of $\Hom_{\kk G}(V_\lambda,\kk(V))$. Therefore, there exists a $\KK$-linear relation
    \[
    \phi_s = a_1\psi_1^\lambda + \dots + a_{d_\lambda}\psi^\lambda_{d_\lambda}
    \]
    in the $\KK$-vector space $\Hom_{\kk G}(V_\lambda,\kk(V))$. We obtain 
    \eqref{eq:lc} upon application of both sides to $v_j^\lambda$. Our work is to show that the $a_1,\dots,a_{d_\lambda}$, which a priori belong to $\KK$, actually belong to $\KK_\mathrm{low}$.

    We apply Lemma~\ref{lem:nonsingular} with $\mu = \lambda$ and $k=j$, obtaining $h_1,\dots, h_{d_\lambda}\in \kk[V]$ homogeneous of degree $\leq \Dspan$ so that the matrix 
    \[
    \left(\Reynolds\left(h_i\psi_\ell^\lambda\left(v^\lambda_j\right)\right)\right)_{1\leq i,\ell\leq d_\lambda}
    \]
    is nonsingular. Multiplying \eqref{eq:lc} through by each of $h_1,\dots,h_{d_\lambda}$ results in $d_\lambda$ equations. We then apply the Reynolds operator $\Reynolds$ to each of them. Noting that $\Reynolds$ is $\KK$-linear, we obtain the system of equations
    \begin{align*}
        \Reynolds\left(h_1\phi_s\left(v_j^\lambda\right)\right) &= a_1\Reynolds\left(h_1\psi_1^\lambda\left(v_j^\lambda\right)\right) + \dots + a_{d_\lambda}\Reynolds\left(h_1\psi_{d_\lambda}^\lambda\left(v^\lambda_j\right)\right) \\
         &\;\;\vdots \\
        \Reynolds\left(h_{d_\lambda}\phi_s\left(v_j^\lambda\right)\right) &= a_1\Reynolds\left(h_{d_\lambda}\psi_1^\lambda\left(v_j^\lambda\right)\right) + \dots + a_{d_\lambda}\Reynolds\left(h_{d_\lambda}\psi_{d_\lambda}^\lambda\left(v^\lambda_j\right)\right),
    \end{align*}
    or, in other words, the nonsingular matrix equation
    \begin{equation}\label{eq:matrix-eqn}
    \begin{pmatrix}\Reynolds\left(h_i\phi_s\left(v_j^\lambda\right)\right)\end{pmatrix}_i = \left(\Reynolds\left(h_i\psi_\ell^\lambda\left(v^\lambda_j\right)\right)\right)_{i\ell}\begin{pmatrix}a_\ell\end{pmatrix}_{\ell},
    \end{equation}
    where $i$ and $\ell$ both vary in $1,\dots,d_\lambda$. Solving this matrix equation for $\begin{pmatrix} a_\ell\end{pmatrix}_\ell$ expresses $a_1,\dots,a_{d_\lambda}$ rationally in terms of the other matrix entries. We now show that the latter all belong to $\KK_\mathrm{low}$.
    
    Indeed, each $h_i$ is a (homogeneous) polynomial of degree at most $\Dspan$, as is each $\psi_\ell^\lambda\left(v_j^\lambda\right)$. Meanwhile, $\phi_s\left(v_j^\lambda\right)$ is a (homogeneous) polynomial of degree at most $D_I$. Because $\Reynolds$ preserves degree, we conclude all the matrix entries in \eqref{eq:matrix-eqn} besides the $a$'s, are polynomials of degree at most $\max(\Dspan + D_I, 2\Dspan)$. So they belong to $\KK_\mathrm{low}$ by construction of the latter. Therefore, $a_1,\dots,a_{d_\lambda}$ belong to $\KK_\mathrm{low}$ as well.
\end{proof}

\subsection{Assembling the proof}\label{sec:assemble-proof}

In this subsection we assemble the proof of Theorem~\ref{thm:main} from the lemmas in Sections~\ref{sec:generic-orbit-ideal} and \ref{sec:matrix-eqs}. 
Both of the assumptions of Section~\ref{sec:matrix-eqs} on $\kk$, i.e., that it is algebraically closed  and of characteristic not dividing $|G|$, are in effect here.

To begin with, it is sufficient to prove that $\KK_\mathrm{low}=\KK$, because 
\[
\max(\Dspan + D_I, 2\Dspan)\leq 2\Dspan + 1
\]
by Lemma~\ref{lem:DI-and-Dspan}.

By Lemma~\ref{lem:coeffs-generate}, the $\KK$-coefficients that appear in any generating set for $I$ will generate $\KK$ as a field. We apply this lemma to a generating set consisting of elements of $I$ of degree $\leq D_I$ (which exists by definition of $D_I$). Our task is to show that the coefficients of these polynomials can be taken to belong to $\KK_\mathrm{low}$.

Let $f$ be any element of $I$ of degree $\leq D_I$. By definition, the coefficients that appear in $f$ are the $\KK$-coordinates of $f$ relative to the monomial basis for $\KK[X_1,\dots,X_n]_{\leq D_I}$, i.e., the basis consisting of monomials in $X_1,\dots, X_n$. But the basis \eqref{eq:basis-for-leqDI} for $\KK[X_1,\dots,X_n]_{\leq D_I}$ differs from the monomial basis by a change-of-basis matrix defined over $\kk$, as both the monomial basis and \eqref{eq:basis-for-leqDI} are actually bases for $\kk[X_1,\dots,X_n]_{\leq D_I}$. Thus, the $\KK$-coordinates of $f$ with respect to the monomial basis belong to $\KK_\mathrm{low}$ if and only if the $\KK$-coordinates of $f$ with respect to the alternative basis \eqref{eq:basis-for-leqDI} do so. We will be done if we can show that the $\KK$-coordinates of $f$ with respect to \eqref{eq:basis-for-leqDI} can be taken, possibly after multiplication through by an element of $\KK$, to belong to $\KK_\mathrm{low}$.

What it means for $f$ to belong to $I$ is that $f$ is a $\KK$-linear combination of the elements \eqref{eq:basis-for-leqDI}, viewed as elements of $\kk[X_1,\dots,X_n]_{\leq D_I}$, that, upon the substitution $X_i\mapsto x_i$ ($i=1,\dots,n$), evaluates to zero in $\kk(V)$. In other words, $f$ is a $\KK$-linear relation between the elements \eqref{eq:basis-for-leqDI} when they are viewed as belonging to $\kk(V)$. Viewing $\kk(V)$ as a finite-dimensional $\KK$-vector space with basis \eqref{eq:basis-for-Vreg}, we can compute any such linear relation using Gaussian elimination on the $\KK$-vectors that give the $\KK$-coordinates of the elements \eqref{eq:basis-for-leqDI} with respect to the $\KK$-basis \eqref{eq:basis-for-Vreg}. Lemma~\ref{lem:matrix-equation} tells us that all of these $\KK$-coordinates belong to $\KK_\mathrm{low}$. Since Gaussian elimination only involves field operations, the $\KK$-linear relations between the elements \eqref{eq:basis-for-leqDI} can be computed over $\KK_\mathrm{low}$, i.e., every $\KK$-linear relation is a $\KK$-multiple of a $\KK_\mathrm{low}$-linear relation. In particular $f$ can be taken (possibly after scaling by a $\KK$-factor) to be a linear relation between the elements \eqref{eq:basis-for-leqDI} with coefficients lying in $\KK_\mathrm{low}$.

Allowing $f$ to vary over a generating set for $I$ (and recalling Lemma~\ref{lem:coeffs-generate}), we conclude that elements of $\KK_\mathrm{low}$ generate $\KK$, and thus that $\KK_\mathrm{low}=\KK$. This completes the proof of Theorem~\ref{thm:main}. \qed

\section{Explicit examples}\label{sec:examples}

In this section, we consider examples. Section~\ref{ex:regular} shows that, in the non-modular setting, Theorem~\ref{thm:main} generalizes  \cite[Theorem~III.1]{edidin2025orbit}. Section~\ref{ex:worked-ex} illustrates in detail the inner workings of the proof of Theorem~\ref{thm:main}. Section~\ref{ex:sharp} exhibits that the inequality in Theorem~\ref{thm:main} is sharp.

\subsection{Representations containing the regular representation}\label{ex:regular}

It is known from signal processing that the orbits of the action of a compact Lie group on its regular representation $L^2(G,\CC)$ are generically separated by the {\em bispectrum}, which is a set of invariant functions of degree 3 made of holomorphic and antiholomorphic polynomials \cite{kakarala2009completeness, smach2008generalized}. When $G$ is finite, $L^2(G,\CC)$ is finite-dimensional; but because the entries of the bispectrum are not true polynomials, this orbit separation result does not immediately imply a field generation result. However, it does suggest the conjecture that $\bfield(G,V)\leq 3$ for any finite group $G$ and $V$ its regular representation. This was proven for finite abelian $G$ in non-modular characteristic in \cite[Theorem~4.1]{bandeira2023estimation}. (Equality occurs unless $|G|=2$.) Recently, Edidin and Katz \cite{edidin2025orbit}  generalized this result in the characteristic zero case, proving the conjecture and going further: if $G$ is any finite group, and $V$ merely {\em contains} the regular representation, then Theorem~III.1  of \cite{edidin2025orbit} implies that $\bfield(G,V) \leq 3$. 
    
In fact, \cite[Theorem~III.1]{edidin2025orbit} shows that the polynomial invariants of degree $\leq 3$ generically separate the orbits of $G$ on $V$; the result is formulated over any infinite field. In characteristic zero, this implies $\bfield(G,V)\leq 3$, by the following considerations. First, it does not affect $\bfield$ to base-change to the algebraic closure of $\kk$ \cite[Lemma~2.1]{blum2024degree}, so we may assume that $\kk$ is algebraically closed of characteristic zero. Then, the fact that $\kk[V]_{\leq 3}$ generically separates the orbits of $G$ on $V$ implies that it also generates the invariant field, by \cite[Lemma~2.1]{popov-vinberg}.\footnote{The fact that generic orbit separators over an algebraically closed field of characteristic zero generate the field of rational invariants can be deduced from a theorem  due to Rosenlicht \cite[Theorem~2]{rosenlicht1956some}.}

In the non-modular case, Theorem~\ref{thm:main} is a further generalization of \cite[Theorem~III.1]{edidin2025orbit}. Edidin and Katz' hypothesis that $V$ contains the regular representation implies that $D_\mathrm{span}=1$, as we now show.

\begin{proposition}\label{prop:reg-rep-implies-Dspan=1}
    Let $G$ be a nontrivial finite group and $\kk$ an arbitrary field. If $V$ is a representation of $G$ over $\kk$ containing the regular representation, then $\Dspan = 1$. 
\end{proposition}

\begin{proof}
    Since the regular representation is a projective and injective $\kk G$-module, $V$ contains it if and only if $V$ surjects onto it. Thus, by means of Theorem~\ref{thm:monotonic-in-the-rep} below, we immediately reduce the proof that $\Dspan \leq 1$ to the case that $V$ is itself the regular representation. That $\Dspan = 1$ will then follow, because $\Dspan$ must be at least $1$, since $\kk(V)$ is of dimension greater than $1$ as a $\kk(V)^G$-vector space, since $G$ is nontrivial.

    Let $x_1,\dots,x_n$ be  coordinate functions on $V$, with $n=|G|$. Enumerate the elements of $G$ by $g_1=1,g_2,\dots,g_n$. By definition of the regular representation, there exists $v\in V$ such that the $n$ vectors $g_1 v=v, g_2 v, \dots, g_n v$ are linearly independent. Thus the square matrix
    \[M(v):=
    \begin{pmatrix}
        x_1(v)&x_2(v)&\dots & x_n(v)\\
        x_1(g_2v)&x_2(g_2v)&\dots & x_n(g_2v)\\
        \vdots & \vdots & \ddots & \vdots \\
        x_1(g_nv)&x_2(g_nv)&\dots & x_n(g_nv)
    \end{pmatrix}
    =
    \begin{pmatrix}
        x_1(v)&x_2(v)&\dots & x_n(v)\\
        g_2^{-1}x_1(v)&g_2^{-1}x_2(v)&\dots & g_2^{-1}x_n(v)\\
        \vdots & \vdots & \ddots & \vdots \\
        g_n^{-1}x_1(v)&g_n^{-1}x_2(v)&\dots & g_n^{-1}x_n(v)
    \end{pmatrix}
    \]
    with entries in $\kk$ is nonsingular. It follows that the matrix
    \[
    M:=\begin{pmatrix}
        x_1&x_2&\dots & x_n\\
        g_2^{-1}x_1&g_2^{-1}x_2&\dots & g_2^{-1}x_n\\
        \vdots & \vdots & \ddots & \vdots \\
        g_n^{-1}x_1&g_n^{-1}x_2&\dots & g_n^{-1}x_n
    \end{pmatrix}
    \]
    with entries lying in $\kk[V]$, whose entry-by-entry evaluation at $v$ is the previous matrix $M(v)$, is also nonsingular. 

    This implies that $x_1,\dots,x_n$ are linearly independent over $\kk(V)^G$, as follows. A nontrivial linear relation 
    \[
    \sum_{j=1}^n c_jx_j = 0
    \]
    taking place in $\kk(V)$, with coefficients $c_j\in \kk(V)^G$, would also (by application of $g_1^{-1}=1,g_2^{-1},\dots,g_n^{-1}$) yield a nontrivial solution in the $c_j$'s (considering that they are $G$-invariant) to the matrix equation 
    \[
    \begin{pmatrix}
        x_1&x_2&\dots & x_n\\
        g_2^{-1}x_1&g_2^{-1}x_2&\dots & g_2^{-1}x_n\\
        \vdots & \vdots & \ddots & \vdots \\
        g_n^{-1}x_1&g_n^{-1}x_2&\dots & g_n^{-1}x_n
    \end{pmatrix}
    \begin{pmatrix}c_1\\c_2\\ \vdots \\ c_n
    \end{pmatrix}=0,
    \]
    which contradicts the nonsingularity of $M$.

    Since $x_1,\dots,x_n$ are linearly independent over $\kk(V)^G$, and $[\kk(V):\kk(V)^G]=n$, dimension counting leads to the conclusion that $x_1,\dots,x_n\in \kk[V]_1$ span $\kk(V)$ over $\kk(V)^G$. Therefore, $\Dspan\leq 1$, and, as above, strict inequality is impossible because $G$ is nontrivial. We conclude $\Dspan=1$.
\end{proof}

Proposition~\ref{prop:reg-rep-implies-Dspan=1} allows us to characterize the condition $\Dspan=1$.

\begin{proposition}\label{prop:characterize-Dspan=1}
    Let $G$ be a nontrivial group, $\kk$ an arbitrary field, and $V$ a finite-dimensional representation of $G$ over $\kk$. Then $\Dspan=1$ if and only if $\kk\oplus V$ contains the regular representation.
\end{proposition}

\begin{proof}
    By Proposition~\ref{prop:c-Dreg-Dspan-topdeg} below, $\Dreg\leq \Dspan$. Thus if $\Dspan = 1$, then $\Dreg \leq 1$, so the regular representation is contained in $\kk[V]_{\leq 1} = \kk \oplus V^*$. Then it is a homomorphic image of $\kk\oplus V$ by taking duals, and because it is also a projective $\kk G$-module this means it is contained in $\kk\oplus V$.

    In the other direction, suppose $\kk \oplus V$ contains the regular representation. Let $V':=\kk \oplus V$. By Proposition~\ref{prop:reg-rep-implies-Dspan=1} we have $\Dspan(G,V')=1$. Meanwhile, let $x'$ be a coordinate function on the $\kk$ summand of $V'$, let $x_1,\dots,x_n$ be coordinates on $V$, and let $\FF:=\kk(x')$. Then $\kk(V') = \kk(x',x_1,\dots,x_n)=\kk(x')(x_1,\dots,x_n) = \FF(V_\FF)$, and this identification is equivariant so $\kk(V')^G = \FF(V_\FF)^G$. Also, $\kk[V']_{\leq d} = \kk[x',x_1,\dots,x_n]_{\leq d} \subseteq \kk(x')[x_1,\dots,x_n]_{\leq d} = \FF[V_\FF]_{\leq d}$ for any $d$. Therefore, $\Dspan(G,V_\FF)\leq\Dspan(G,V')$, and we have
    \[
    \Dspan(G,V)=\Dspan(G,V_\FF)\leq \Dspan(G,V')=1,
    \]
    where the first equality is by Lemma~\ref{lem:base-change},  the inequality is by what has been done above, and the final equality is by Proposition~\ref{prop:reg-rep-implies-Dspan=1}. It follows (as in the proof of Proposition~\ref{prop:reg-rep-implies-Dspan=1}) that $\Dspan(G,V)=1$ because $G$ is nontrivial.
\end{proof}

\begin{remark}
    If $\Char\kk$ divides $|G|$, then $\kk$ is not a summand of the regular representation, so in that situation Proposition~\ref{prop:characterize-Dspan=1} becomes the statement that $\Dspan=1$ iff $V$ contains the regular representation.
\end{remark}

Thus, in the situation of Edidin and Katz' result, Proposition~\ref{prop:reg-rep-implies-Dspan=1} (or Proposition~\ref{prop:characterize-Dspan=1}) yields that $\Dspan=1$, so in the presence of the non-modular hypothesis, Theorem~\ref{thm:main} then specializes to $\bfield \leq 2(1) + 1 = 3$. Edidin and Katz' original conclusion of generic orbit separation then also follows: As mentioned in Section~\ref{sec:generic-orbit-ideal}, because $G$ is finite, the categorical quotient variety $V\sslash G$ is a geometric quotient, and $\kk(V)^G = \kk(V\sslash G)$ is the field of rational functions on this quotient. So the fact that generators for the rational function field of a variety separate generic points of the variety (together with $\bfield \leq 3$) gives us immediately that the invariants of degree $\leq 3$ separate generic orbits of $G$ on $V$.\footnote{The finiteness of $G$ makes this argument expedient, but is not really necessary for the conclusion that generators for the invariant field separate generic orbits. Rosenlicht's theorem \cite[Theorem~2]{rosenlicht1956some} (and see also \cite[Theorem~6.2]{dolgachev2003lectures}) says that,  for any rational action by an algebraic group $G$ on an irreducible algebraic variety $X$, there is a nonempty Zariski-open subset $U\subseteq X$ stable under the action of $G$, such that $U\sslash G$ is a geometric quotient and $\kk(X)^G=\kk(U\sslash G)$.}

\begin{remark}
    It is worth noting that $\Dspan=1$ is not a necessary condition for $\bfield \leq 3$. Here is an example.
    Let $G$ be the dihedral group of order $2n$. Let it act on $V=\CC^n$ as a permutation group, by the permutations that either preserve or reverse a given cyclic order on the standard basis. Note that this representation has only half the dimension of the regular representation, so $\Dspan > 1$ by dimension-counting. It is shown in \cite[Theorem~2.6]{edidin2025generic} that $\CC[V]^G_{\leq 3}$ separates generic orbits, and it follows by \cite[Lemma~2.1]{popov-vinberg} (just as above) that $\bfield \leq 3$.
\end{remark}

\subsection{Illustration of proof of the main theorem}\label{ex:worked-ex}
Take $\kk = \CC$. Let $G = Q_8 = \langle \bfi, \bfj\rangle$ be the quaternion group, and consider its irreducible 2-dimensional representation $V=\CC^2$ given by
\[
\bfi \mapsto \begin{pmatrix} & -1\\1 & \end{pmatrix},\; \bfj \mapsto \begin{pmatrix}-i& \\ & i\end{pmatrix}.
\]
Then $\bfi x = y$, $\bfi y = -x$, $\bfj x = ix$, and $\bfj y = -iy$. The invariant algebra $\CC[x,y]^{Q_8}$ is generated by the invariants $x^4+y^4$, $x^2y^2$ of degree 4, and $xy(x^4-y^4)$ of degree 6. The degree 6 invariant is a quadratic surd over the field generated by the degree 4 invariants. There are no invariants of degree lower than 6 outside the span of the degree 4 invariants, so $\bfield(Q_8,V) = \beta(Q_8,V)=6$ in this situation. We will see below that $\Dspan(Q_8,V) = 3$.

We now decompose the low-degree part of $\CC[x,y]$ into irreducible representations of $G$ in order to see the proof of Theorem~\ref{thm:main} in action. The group $G$ has five isomorphism classes of irreducible representations: the trivial representation, the representation $V$ itself, and three one-dimensional representations with kernels the three index-2 subgroups generated respectively by $\bfi$, $\bfj$, and $\bfk = \bfi\bfj$. We choose the index set
\[
\Irr_\CC(G) = \{\mathbf{1},\bfi,\bfj,\bfk,\mathbf{Sta}\}
\]
($\mathbf{Sta}$ for ``Standard"). We fix the following representatives of each isomorphism class of irreducible representations, and the following distinguished bases for them:
\[
V_\mathbf{1} = \left\langle v^\mathbf{1}\right\rangle,\; \bfi v^\mathbf{1}= \bfj v^\mathbf{1} = v^\mathbf{1}
\]
\[
V_\bfi = \left\langle v^\bfi \right\rangle, \; \bfi v^\bfi = v^\bfi, \; \bfj v^\bfi = -v^\bfi
\]
\[
V_\bfj = \left\langle v^\bfj \right\rangle, \; \bfi v^\bfj = -v^\bfj, \; \bfj v^\bfj = v^\bfj
\]
\[
V_\bfk = \left\langle v^\bfk \right\rangle, \; \bfi v^\bfk = -v^\bfk, \; \bfj v^\bfk = -v^\bfk
\]
\begin{align*}
V_\mathbf{Sta} = \left\langle v_x^\mathbf{Sta}, v_y^\mathbf{Sta}\right\rangle,\; \bfi v_x^\mathbf{Sta} &= v_y^\mathbf{Sta}, \; \bfi v_y^\mathbf{Sta} = - v_x^\mathbf{Sta},\\ \bfj v_x^\mathbf{Sta} &= iv_x^\mathbf{Sta},\; \bfj v_y^\mathbf{Sta} = -iv_y^\mathbf{Sta}
\end{align*}
To minimize visual clutter, we have suppressed subscripts for the $v^\lambda$ for those $\lambda$ for which $d_\lambda=1$, and we do the same below for the $\psi^\lambda$. Also, we have chosen the basis for $V_\mathbf{Sta}$ (along with its notation, in particular the subscripts) so that the map $v_x^\mathbf{Sta}\mapsto x$, $v_y^\mathbf{Sta} \mapsto y$ is an isomorphism of representations from $V_\mathbf{Sta}$ to $\CC[x,y]_1$. We proceed to decompose the rest of the low-degree components of $\CC[x,y]$ into irreducible representations of $Q_8$. It turns out that  $\Dspan=3$ while $D_I=4$ in this case (this will be justified below), so we go up to degree 4.
\begin{align*}
\CC[x,y]_0 &= \langle 1\rangle \cong V_\mathbf{1}\\
\CC[x,y]_1 &= \langle x,y \rangle \cong V_\mathbf{Sta}\\
\CC[x,y]_2 &= \langle x^2 + y^2\rangle\oplus \langle xy \rangle \oplus \langle x^2 - y^2\rangle \cong V_\bfi \oplus V_\bfj \oplus V_\bfk\\
\CC[x,y]_3 &= \langle y^3, -x^3\rangle \oplus \langle x^2y, -xy^2 \rangle \cong V_\mathbf{Sta} \oplus V_\mathbf{Sta}\\
\CC[x,y]_4 &= \langle x^4+y^4\rangle \oplus \langle x^2y^2 \rangle \oplus \langle xy(x^2-y^2)\rangle \oplus \langle x^4-y^4 \rangle \oplus \langle xy(x^2+y^2)\rangle \cong 2V_\mathbf{1} \oplus V_\bfi \oplus V_\bfj \oplus V_\bfk.
\end{align*}
The basis given for each subrepresentation of $\CC[x,y]$ above also specifies an embedding $\phi_s:V_{\lambda(s)} \rightarrow \CC[x,y]$ of the corresponding representative irreducible $V_\lambda$, by mapping the latter's distinguished basis to the specified basis in $\CC[x,y]$; we index these embeddings by the first of the listed basis elements in $\CC[x,y]$. For example, the line  
\[
\CC[x,y]_3 = \langle y^3, -x^3\rangle \oplus \langle x^2y, -xy^2 \rangle \cong V_\mathbf{Sta} \oplus V_\mathbf{Sta}
\]
specifies two embeddings $\phi_{y^3}, \phi_{x^2y}:V_\mathbf{Sta}\rightarrow \CC[x,y]_3$, namely
\[
\phi_{y^3}\left(v_x^\mathbf{Sta}\right) = y^3,\; \phi_{y^3}\left(v_y^\mathbf{Sta}\right) = -x^3
\]
and
\[
\phi_{x^2y}\left(v_x^\mathbf{Sta}\right) = x^2 y,\; \phi_{x^2y}\left(v_y^\mathbf{Sta}\right) = -xy^2.
\]
The other lines should be read similarly. So, in the notation of the proof of Theorem~\ref{thm:main},
\[
S=\{1, x, x^2+y^2, xy, x^2 - y^2, y^3, x^2y, x^4+y^4, x^2y^2, xy(x^2-y^2), x^4-y^4, xy(x^2+y^2)\},
\]
(the index set of these embeddings), and
\[
\lambda(1) = \mathbf{1},\; \lambda(x) = \mathbf{Sta},\; \lambda(x^2 + y^2) = \bfi,\; \lambda(xy) = \bfj,\; \text{ etc.}
\]
We have thus written down the basis $\phi_s(v_j^{\lambda(s)})$ for $\CC[x,y]_{\leq D_I}$; note that there are 15 elements. (There are $12$ elements of $S$, but three of the $\phi_s$'s have two-dimensional image.)

We note that, in so doing, we have also collected enough irreducible representations to constitute a regular representation: $\CC[x,y]_0\oplus \CC[x,y]_2$ plus any two of the three copies of $V_\mathbf{Sta}$ yield a regular representation. We (arbitrarily) pick the two  in degree 3, and show that the resulting regular representation even spans $\CC(x,y)$ over $\CC(x,y)^{Q_8}$.  Indeed,  this pair of embeddings $\phi_{y^3}, \phi_{x^2y}$ of $V_\mathbf{Sta}$ is linearly independent over $\KK$: this is manifested in the nonsingularity of the matrix
\[
\begin{pmatrix}
    y^3 & x^2y \\
    -x^3 & -xy^2
\end{pmatrix},
\] 
where the columns are the images of the distinguished basis for $V_\mathbf{Sta}$ under the two embeddings.  
Since $\dim V_\mathbf{Sta}=2$, it follows that the images of $\phi_{y^3},\phi_{x^2y}$ span the $V_\mathbf{Sta}$-isotypic component of $\CC(x,y)$ over $\KK$; the rest of the irreducible representations are one-dimensional, and embed in $\CC[x,y]_0\oplus \CC[x,y]_2$, so $\CC[x,y]_0$ and $\CC[x,y]_2$ span the rest of the isotypic components. Hence $\Dspan = 3$ as claimed above. So we have 
\[
\Vreg = \langle 1\rangle \oplus \langle x^2 + y^2\rangle\oplus \langle xy \rangle \oplus \langle x^2 - y^2\rangle \oplus \langle y^3, -x^3\rangle\oplus \langle x^2y, -xy^2\rangle,
\]
and
\begin{align*}
\psi^\mathbf{1} &= \phi_1,\\
\psi^\bfi &= \phi_{x^2+y^2},\\
\psi^\bfj &= \phi_{xy},\\
\psi^\bfk &= \phi_{x^2-y^2},\\
\psi^\mathbf{Sta}_1 &= \phi_{y^3},\\
\psi^\mathbf{Sta}_2 &= \phi_{x^2y}
\end{align*}
give the basis of embeddings of the irreducible representations into $\Vreg$.

We now find a generating set for $I$ by finding $\KK = \CC(x,y)^{Q_8}$-linear relations between the $15$ elements of our $\CC$-basis for $\CC[x,y]_{\leq D_I}$, using the strategy of Lemmas~\ref{lem:nonsingular} and \ref{lem:matrix-equation}. The elements of this basis not already in $\Vreg$ are
\[
\phi_x\left(v^\mathbf{Sta}_x\right) = x \text{ and } \phi_x\left(v^\mathbf{Sta}_y\right) = y
\]
in degree 1, and the five elements
\[
x^4+y^4,\; x^2y^2,\; xy(x^2-y^2),\; x^4-y^4,\; xy(x^2+y^2)
\]
in degree 4. We now represent these seven elements of this basis for $\CC[x,y]_{\leq D_I}$ as $\KK$-linear combinations of our basis for $\Vreg$, using the strategy of Section~\ref{sec:matrix-eqs}, showing that all the coefficients in these linear combinations lie in $\KK_\mathrm{low}$.

The method is best illustrated by the computations for $\phi_x(v^\mathbf{Sta}_x)=x$ and $\phi_x(v^\mathbf{Sta}_y)=y$, because the corresponding representation $V_\mathbf{Sta}$ is not one-dimensional (thus the matrix equation of Lemma~\ref{lem:matrix-equation} involves an actual matrix). So we do this case first. Because $\psi_1^\mathbf{Sta}=\phi_{y^3}$ and $\psi_2^\mathbf{Sta}=\phi_{x^2y}$ are a $\KK=\CC(x,y)^{Q_8}$-basis for $\Hom_{\CC G}(V_\mathbf{Sta},\CC(x,y))$, we know that $\phi_x$ lies in their $\KK$-span; i.e., there is a linear relation
\[
\phi_{x} = a_1\phi_{y^3} + a_2 \phi_{x^2y}
\]
with $a_1,a_2\in \KK$. We will compute $a_1,a_2$, showing that they are actually in $\KK_\mathrm{low}$. Applying both sides to $v^\mathbf{Sta}_x$, we have
\[
x = a_1y^3 + a_2x^2y.
\]
Now we find $h_1, h_2$ such that the matrix
\[
\begin{pmatrix}
\Reynolds (h_1y^3) & \Reynolds(h_1x^2y) \\
\Reynolds(h_2y^3) & \Reynolds(h_2x^2y)
\end{pmatrix}
\]
is nonsingular. We can take $h_1=xy^2$ and $h_2=x^3$. (Note that, because $V_\mathbf{Sta}$ is a self-dual representation, these choices illustrate the remark following Lemma~\ref{lem:nonsingular} that the the $h_i$ must be from the dual isotypic component.) The matrix equation is
\[
\begin{pmatrix}
    \Reynolds(xy^2 \cdot x) \\ \Reynolds(x^3 \cdot x)
\end{pmatrix} =
\begin{pmatrix}
\Reynolds (xy^2 \cdot y^3) & \Reynolds(xy^2 \cdot x^2y) \\
\Reynolds(x^3 \cdot y^3) & \Reynolds(x^3 \cdot x^2y)
\end{pmatrix}
\begin{pmatrix}a_1 \\ a_2\end{pmatrix},
\]
where, by construction, all the matrix entries on the left side have degree $\leq \Dspan + D_I$, while all those on the right side have degree $\leq 2\Dspan$; thus the solutions $a_1,a_2$ lie in $\KK_\mathrm{low}$. Computing the Reynolds operator, we get
\[
\begin{pmatrix}
    x^2y^2 \\ \frac{x^4+y^4}{2}
\end{pmatrix} =
\begin{pmatrix}
-\frac{xy(x^4-y^4)}{2} & 0 \\
0 & \frac{xy(x^4-y^4)}{2}
\end{pmatrix}
\begin{pmatrix}a_1 \\ a_2\end{pmatrix},
\]
so
\begin{align*}
    a_1 &= -\frac{2xy}{x^4-y^4} \\
    a_2 &= \frac{x^4+y^4}{xy(x^4-y^4)}
\end{align*}
are the solutions to
\[
\phi_x = a_1 \psi_1^\mathbf{Sta} + a_2 \psi_2^\mathbf{Sta}.
\]
Evaluating this latter equation on $v_x^\mathbf{Sta}$ and $v_y^\mathbf{Sta}$, we obtain the relations
\[
x = -\frac{2xy}{x^4-y^4}(y^3) + \frac{x^4+y^4}{xy(x^4-y^4)}(x^2y)
\]
and
\[
y = -\frac{2xy}{x^4-y^4}(-x^3) + \frac{x^4+y^4}{xy(x^4-y^4)}(-xy^2)
\]
holding in $\kk(V)$. These yield elements
\[
X + \frac{2xy}{x^4-y^4} Y^3 - \frac{x^4+y^4}{xy(x^4-y^4)} X^2Y
\]
and
\[
Y - \frac{2xy}{x^4-y^4}X^3 + \frac{x^4+y^4}{xy(x^4-y^4)} XY^2
\]
of $I$ lying in $\KK[X,Y]_{\leq D_I}$.

The computations for the degree-4 elements of our basis for $\CC[x,y]_{\leq D_I}$ are simpler (and perhaps not quite as illuminating), because the corresponding irreducible representations of $Q_8$ are one-dimensional. We illustrate with $\phi_{x^4-y^4}(v^\bfj)=x^4-y^4$; the other calculations are similar. This element, being in the $V_\bfj$-isotypic component, must lie in the (one-$\KK$-dimensional) image of $\psi^\bfj$, which is spanned by $xy$. We have
\[
x^4-y^4 = axy,
\]
where $a\in \KK$ is invariant. One can read off of this that $a = (x^4-y^4)/xy$, but the point is to prove that this $a$ lies in $\KK_\mathrm{low}$, so we multiply the equation through by $h$ in the dual component to $V_\bfj$ in $\Vreg$, so that the equation for $a$ is in terms of invariant polynomials. Now $V_\bfj$ is self-dual (actually, all the irreducible representations of $Q_8$ are self-dual), so we take $h=xy$, yielding
\[
xy(x^4-y^4)=(x^2y^2)a,
\]
and $a$ is manifested as a ratio of invariant polynomials of degree $\leq \max(\Dreg + D_I,2\Dreg)$ as desired. (Note that, whereas in the $>1$-dimensional situation we needed to apply the Reynolds operator after multiplying by the $h$'s to get the matrix entries to be invariant, for one-dimensional representations this isn't necessary.) The corresponding element of $I$ is 
\[
X^4 - Y^4 - \frac{x^4-y^4}{xy}XY,
\]
and the point of the above is that the coefficient lies in $\KK_\mathrm{low}$. By similar logic, from the rest of the degree 4 elements of our basis for $\CC[x,y]_{\leq D_I}$, we get the following elements of $I$, with confidence that the $\KK$-coefficients all actually lie in $\KK_\mathrm{low}$:
\[
X^4 + Y^4 - x^4+y^4,
\]
\[
X^2Y^2 - x^2y^2,
\]
\[
XY(X^2-Y^2) - \frac{xy(x^2-y^2)}{x^2+y^2}(X^2+Y^2),
\]
and
\[
XY(X^2+Y^2) - \frac{xy(x^2+y^2)}{x^2-y^2}(X^2-Y^2).
\]

Since the relations we have found express, as $\KK$-linear combinations of our $\CC$-basis for $V_\mathrm{reg}$, all the elements of our $\CC$-basis for $\CC[x,y]_{\leq D_I}$ that do not already lie in $V_\mathrm{reg}$, they actually constitute a $\KK$-spanning set of $\KK$-linear relations among the members of our basis for $\CC[x,y]_{\leq D_I}$, and so the corresponding elements of $I$ in fact generate $I$. Thus, by Lemma~\ref{lem:coeffs-generate}, the coefficients (which we know lie in $\KK_\mathrm{low}$) generate the field $\KK=\CC(x,y)^{Q_8}$, as desired. In fact, the $a_1,a_2$ computed above for the two degree-3 elements of $I$, resulting from the relation $\phi_x = a_1\psi_1^\mathbf{Sta} + a_2\psi_2^\mathbf{Sta}$, already generate $\KK$ by themselves. One can see this ``from scratch" by expressing the algebra generators for $\CC[x,y]^{Q_8}$ in terms of $a_1,a_2$:
\begin{align*}
    x^2y^2 &= \frac{1}{a_2^2 - a_1^2},\\
    x^4+y^4 &= \frac{2a_2}{a_1^3 - a_1a_2^2},\\
    xy(x^4-y^4) &= \frac{2}{a_1^3 - a_1a_2^2}.
\end{align*}

\begin{remark}
The reader has taken us at our word that $D_I=4$, so that by writing down a $\CC$-basis for $\CC[x,y]_{\leq 4}$, we have in fact written one down for $\CC[x,y]_{\leq D_I}$. One can verify this by computing a reduced Gr\"obner basis for the ideal $I'$ in $\KK[X,Y]$ generated by the seven above-identified elements of $I$ (of degrees 3 and 4). For example, with respect degree-lexicographic order with $X\succ Y$, we obtain 
\[
Y^4 + \frac{x^4-y^4}{2xy}XY - \frac{x^4+y^4}{2},
\]
\[
XY^3 - \frac{2x^3y^3}{x^4-y^4} X^2 + \frac{xy(x^4+y^4)}{x^4-y^4} Y^2,
\]
\[
X^3 - \frac{x^4+y^4}{2x^2y^2}XY^2 - \frac{x^4-y^4}{2xy} Y,
\]
and
\[
X^2Y - \frac{2x^2y^2}{x^4+y^4}Y^3 - \frac{xy(x^4-y^4)}{x^4+y^4} X.
\]
One deduces from the initial ideal $\langle Y^4,XY^3,X^3,X^2Y\rangle$ that $\KK[X,Y]/I'$ is  $8$-dimensional as a $\KK=\kk(V)^{Q_8}$-vector space. Since this is also true of $\KK[X,Y]/I\cong \kk(V)$ because $|Q_8|=8$, and $I'\subseteq I$ by construction, we can conclude $I'=I$, and thus $D_I=4$ as claimed.
\end{remark}

\subsection{Theorem~\ref{thm:main} is sharp}\label{ex:sharp}
Section~\ref{ex:regular} manifests Theorem~\ref{thm:main} as sharp if $|G|\geq 3$. Let $V$ be the regular representation of $G$. Then $\Dspan = 1$ and so $\bfield \leq 3$ by Theorem~\ref{thm:main}. But meanwhile also $\bfield \geq 3$ by \cite[Section~4.3.3]{bandeira2023estimation} unless $G$ is an elementary abelian 2-group $(\ZZ/2\ZZ)^k$ for some $k\in\NN$---and actually, $\bfield \geq 3$ even in the latter case, unless $k=0$ or $1$, by diagonalizing the action and considering the lattice of invariant Laurent monomials (as in \cite{hubert-labahn} and \cite{blum2024degree}): for $k>1$, there exist relations $\chi + \xi + \rho = 0$ between three distinct nontrivial characters in the character group of $(\ZZ/2\ZZ)^k$, and these correspond to degree-3 invariant monomials not in the field generated by the degree $\leq 2$ invariants. So $\bfield = 3$ unless $|G|=1$ or $2$.

Here is another example, with arbitrarily large $\bfield$. Let $V=\CC^2$. For an odd natural number $n$, let $\zeta$ be a primitive $n$th root of unity in $\CC$. Let $G = C_n = \langle g\rangle$ be cyclic of order $n$ with generator $g$. Suppose $g$ acts on $V$ via the matrix
\[
\begin{pmatrix}
        \zeta & \\ 
         & \zeta^{-1}
\end{pmatrix}.
\]
Then if $x,y$ are coordinate functions on $\CC^2$, we have $gx = \zeta^{-1}x$, $gy = \zeta y$. A minimum-degree polynomial basis for $\CC(x,y)$ over $\CC(x,y)^G$ is
\[
x^{(n-1)/2},\dots, x, 1, y, \dots , y^{(n-1)/2}.
\]
Thus
\[
\Dspan = \frac{n-1}{2}
\]
here. Meanwhile, minimum-degree polynomial generators for $\CC(V)^G$ are $x^n$ and $xy$ (or $y^n$ and $xy$), so
\[
\bfield = n
\]
in this case, realizing equality in Theorem~\ref{thm:main}.

\section{The spanning degree}\label{sec:spanning-deg-bound}
In this section, we study the quantity $\Dspan$, giving connections to other quantities of interest, some monotonicity and subadditivity results, and some bounds. Most of the results have no hypotheses on the characteristic of $\kk$, so throughout this section, $\kk$ represents a field of arbitrary characteristic unless explicitly stated otherwise.

\subsection{Relation to $\topdeg$, $\Dirr$, and $\Dreg$}\label{sec:Dirr-Dreg-Dspan-topdeg}

The introduction motivated the study of $\Dspan$ with its relation to other quantities studied in invariant and representation theory, specifically:
\begin{enumerate}
    \item $\topdeg(G,V)$, the minimum degree $d$ such that the vector space $\kk[V]_{\leq d}$ generates $\kk[V]$ as a module over $\kk[V]^G$.
    \item $\Dreg(G,V)$, the minimum degree $d$ such that the vector space $\kk[V]_{\leq d}$ contains a copy of the regular representation of $G$.
    \item $\Dirr(G,V)$, the minimum $d$ so that the tensor powers of order up to the $d$th contain every irreducible representation of $G$ as a composition factor.
\end{enumerate}
As with $\Dspan$ and $\bfield$, we drop the $(G,V)$ when they are clear from context. We now justify the claims made in the introduction about how these quantities are related.

\begin{proposition}\label{prop:c-Dreg-Dspan-topdeg}
Let $G$ be a finite group, $\kk$ a field, and $V$ a finite-dimensional representation of $G$.
\begin{enumerate}
    \item We have
    \[
    \Dirr \leq \Dreg \leq \Dspan \leq \topdeg.
    \]\label{part:general-ineq}
    \item If it happens that $G$ is abelian and the characteristic of $\kk$ does not divide the order of $G$, then in fact
    \[
    \Dirr = \Dreg = \Dspan.
    \]\label{part:abelian-ineq}
\end{enumerate}
\end{proposition}

\begin{proof}
    Part~\ref{part:general-ineq}: For the first inequality, because the regular representation is self-dual, and both projective and injective as a module over the group ring $\kk G$, it follows that for any $d$, the $G$-representation $\kk[V]_{\leq d}$ contains the regular representation if and only if its dual does so. In particular, $(\kk[V]_{\leq \Dreg})^*$ contains the regular representation. But
    \[
    (\kk[V]_{\leq \Dreg})^* = \bigoplus_{j=0}^{\Dreg} \Sym^j(V),
    \]
    and in turn,
    \[
    \bigoplus_{j=0}^{\Dreg} V^{\otimes j}
    \]
    surjects onto the latter. Since every irreducible representation of $G$ occurs as a composition factor of the regular representation, pulling back along this surjection we find every irreducible representation of $G$ as a composition factor of $\bigoplus_{j=0}^{\Dreg} V^{\otimes j}$. It follows that $\Dirr\leq \Dreg$.

    For the second inequality, recall the notation $\KK = \kk(V)^G$ and the surjection $\Xi:\KK[X_1,\dots,X_n]\rightarrow \kk(V)$ from Section~\ref{sec:main-result} (see equation \eqref{eq:Xi-X}). By definition of $\Dspan$, the restriction of  $\Xi$ to $\KK[X_1,\dots,X_n]_{\leq \Dspan}$ is still a surjection onto $\kk(V)$; the latter is a  projective $\KK G$-module (because $\kk(V)\cong \KK G$ as $\KK G$-modules). Therefore, the restriction of $\Xi$ to $\KK[X_1,\dots,X_n]_{\leq \Dspan}$ splits, so $\KK[X_1,\dots,X_n]_{\leq \Dspan}$ has a summand isomorphic to $\KK G$. Then  a variant of the Noether-Deuring Theorem  \cite[Problem~3.8.4(ii)]{etingof2011introduction} asserts that $\kk G$ is isomorphic to a summand of $\kk[V]_{\leq \Dspan}$. It follows that $\Dreg\leq \Dspan$.\footnote{The astute reader may wonder why this proof, which finds the regular representation as a summand of $\kk[V]_{\leq \Dspan}$, is so much shorter than Section~\ref{sec:graded-normal-basis}, and gets by without the algebraic closure hypothesis in Proposition~\ref{prop:normal-basis}. The difficulty in the latter was not just in making sure the desired summand can be taken to be graded: if it is not graded, write $\kk[V]_{\leq \Dspan}=W\oplus Y$ for $W\cong \kk G$ the summand found by the proof, and then use the Krull-Remak-Schmidt Theorem to compare a decomposition of $\kk[V]_{\leq \Dspan}$ that refines the grading, to another one that refines the decomposition $W\oplus Y$. This shows $W$ is isomorphic to a sum of indecomposables that refines the grading. Rather, the key difficulty in Section~\ref{sec:graded-normal-basis} was to make sure that the summand $\Vreg$ found there {\em both} is graded and simultaenously spans $\kk(V)$ over $\KK$.}

    For the third and final inequality, note that  $\kk(V) \cong \kk[V]\otimes_{\kk[V]^G} \kk(V)^G$ because $G$ is finite. By definition of $\topdeg$, the elements of $\kk[V]_{\leq \topdeg}$ generate $\kk[V]$ as a $\kk[V]^G$-module; it follows by base change from $\kk[V]^G$ to $\kk(V)^G$ that they also generate $\kk(V)$ as a $\kk(V)^G$-module. Thus $\Dspan\leq \topdeg$.

    Part~\ref{part:abelian-ineq}: We first argue that nothing is lost by base-changing to a field containing $|G|$th roots of unity. Let $\FF := \kk(\zeta)$ where $\zeta$ is a primitive $|G|$th root of unity in some extension of $\kk$, and let $V_\FF = V\otimes_\kk \FF$. (Note that such a $\zeta$ exists due to the non-modular hypothesis.)
    
    That $\Dspan$ is unaffected by base-change was proven in Lemma~\ref{lem:base-change}. For $\Dirr$, we argue as follows.  First note that $\FF$ is a splitting field for $G$ \cite[Corollary~9.15 and Theorem~10.3]{isaacs}, and every $\FF$-irreducible representation of $G$ is a constituent (=summand, because Part~\ref{part:abelian-ineq} is under the non-modular hypothesis) of the base change to $\FF$ of one of the $\kk$-irreducibles of $G$ \cite[Corollary~9.5(c)]{isaacs}. So if $\kk, V, V^{\otimes 2},\dots,V^{\otimes d}$ contain  every $\kk$-irreducible of $G$, then $\FF, V_\FF, (V_\FF)^{\otimes 2},\dots,(V_\FF)^{\otimes d}$ contain every $\FF$-irreducible of $G$  as well (using that $(V_\FF)^{\otimes j} = (V^{\otimes j})_\FF$). Thus $\Dirr(G,V_\FF) \leq \Dirr(G,V)$. In the other direction, the base change to $\FF$ of any $\kk$-irreducible representation of $G$ splits into a direct sum of {\em distinct} $\FF$-irreducibles (by combining \cite[Theorem~9.21(a) and (b)]{isaacs} and \cite[p.~93]{serre1977linear}),\footnote{To make this inference explicit: If $\kk$ has positive characteristic, the fact that the $\FF$-irreducible constituents of a $\kk$-irreducible representation are all distinct is automatic \cite[Theorem~9.21(a) and (b)]{isaacs}. If $\kk$ has characteristic zero, it holds because $G$ is abelian, and therefore the Schur index of each of its irreducible representations over the splitting field $\FF$, which counts its multiplicity in the $\kk$-irreducible that contains it, is 1  \cite[p.~93]{serre1977linear}. That said, this point makes the argument a little more economical but isn't really needed. The claim about $\Dirr$ and base change goes through without assuming $G$ is abelian. In characteristic 0, a similar and only slightly more involved argument to what follows goes through, using the fact that a sum of copies of a certain $\FF$-irreducible cannot be defined over $\kk$ unless the number of copies has reached its Schur index \cite[Corollary~10.2(d)]{isaacs}.} and any two nonisomorphic $\kk$-irreducibles split (after base-changing) into {\em disjoint} (even up to isomorphism) sets of $\FF$-irreducibles \cite[Corollary~9.7]{isaacs}; thus the (isomorphism classes of) $\FF$-irreducibles of $G$ yield a partition of the (isomorphism classes of) $\kk$-irreducibles (after base changing to $\FF$). Since we are now assuming non-modularity, $\kk G$ and $\FF G$ are semisimple, so the isomorphism class of a representation is determined by its constituents (and their multiplicities). In particular, if some  $\FF$-irreducibles $X_1,\dots,X_r$ appear as the constituents of the base change to $\FF$ of a $\kk$-irreducible $W$, we must have
    \[
    X_1\oplus \dots\oplus X_r \cong W_\FF.
    \]
    Thus, if $\FF\oplus V_\FF\oplus(V_\FF)^{\otimes 2}\oplus \dots \oplus(V_\FF)^{\otimes d}$ contains a representative of every $\FF$-irreducible representation of $G$, by organizing them according to the $\kk$-irreducibles of which they are constituent, we find the base change to $\FF$ of (a representative of) each $\kk$-irreducible in $\FF\oplus V_\FF\oplus(V_\FF)^{\otimes 2}\oplus \dots \oplus(V_\FF)^{\otimes d}$. Thus (a representative of) every $\kk$-irreducible must be present in $\kk\oplus V\oplus V^{\otimes 2}\oplus\dots\oplus V^{\otimes d}$. So $\Dirr(G,V) \leq \Dirr(G,V_\FF)$, and therefore $\Dirr(G,V)=\Dirr(G,V_\FF)$.

    For $\Dreg$, if $\kk[V]_{\leq d}$ contains a $\kk G$-submodule isomorphic to $\kk G$, then  $\FF[V_\FF]_{\leq d} = \FF\otimes_\kk \kk[V]_{\leq d}$ contains an $\FF G$-submodule isomorphic to $\FF G = \FF \otimes_\kk \kk G$, so $\Dreg(G,V_\FF)\leq \Dreg(G,V)$. In the other direction, if $\FF[V_\FF]_{\leq d}$ contains an $\FF G$-submodule isomorphic to $\FF G$, then it has $\FF G$ as a summand, and the same variant of the Noether-Deuring theorem quoted above \cite[Problem~3.8.4(ii)]{etingof2011introduction} then tells us that $\kk[V]_{\leq d}$ has $\kk G$ as a summand. Thus $\Dreg(G,V)\leq \Dreg(G,V_\FF)$, whence equality.

    This established, we can without loss of generality assume that $\kk$ contains $|G|$th roots of unity, whereupon because $G$ is abelian, its action on $V$ can be diagonalized over $\kk$. We work in a diagonal basis $v_1,\dots, v_n$ for $V$. The dual basis $x_1,\dots,x_n$ for $V^*$ also receives a diagonal action. For $j=1,\dots,n$ let $\chi_j:G\rightarrow \kk^\times$ be the character by which $G$ acts on $x_j$, i.e., such that $gx_j = \chi_j(g)x_j$ for each $g\in G$. Note that then the inverse $\chi_j^{-1}$ in the character group $\widehat G:=\Hom(G,\kk^\times)$ describes the action of $G$ on $v_j$. 
    
    Again because $G$ is abelian, the elements $\chi \in \widehat G$ index a complete set of representatives for the isomorphism classes of $G$'s $\kk$-irreducible representations $W_\chi$, where $W_\chi = \langle w_\chi \rangle_\kk$ with the action given by $g w_\chi = \chi(g) w_\chi$ for all $g$, and the regular representation of $G$ over $\kk$ is isomorphic to $\bigoplus_{\chi\in \widehat G} W_\chi$.
    
    We first prove $\Dirr = \Dreg$. A basis for the tensor algebra $\bigoplus_{j\geq 0} V^{\otimes j}$ is given by the noncommutative monomials in the $v_1,\dots,v_n$, and a basis for $\kk[V]$ is given by the commutative monomials in the $x_1,\dots,x_n$. Either a commutative or a noncommutative monomial is an eigenvalue for the action of $G$, and the action is given by the product in $\widehat G$ of the characters describing the actions of $G$ on the individual $x_j$'s or $v_j$'s; the noncommutativity in the tensor algebra does not introduce any complication, because $\widehat G$ is commutative anyway. Explicitly,
    \[
    g(x_{j_1}x_{j_2}\dots x_{j_s}) = [(\chi_{j_1}\chi_{j_2}\dots\chi_{j_s})(g)](x_{j_1}x_{j_2}\dots x_{j_s})
    \]
    and
    \[
    g(v_{j_1}\otimes v_{j_2}\otimes \dots \otimes v_{j_s}) = [(\chi_{j_1}\chi_{j_2}\dots\chi_{j_s})^{-1}(g)](v_{j_1}\otimes v_{j_2}\otimes \dots \otimes v_{j_s}).
    \]
    Thus, the characters that index the irreducible constituents of $\kk[V]_{\leq d}$ are precisely the inverses of those that index the irreducible constituents of $\bigoplus_{j=0}^d V^{\otimes j}$, for any $d$. Since $\widehat G$ is stable under inversion, $\kk[V]_{\leq d}$ contains a representative of every $\kk$-irreducible exactly when $\bigoplus_{j=0}^d V^{\otimes j}$ does. Since the regular representation is the sum of each $\kk$-irreducible once, this means $\Dirr=\Dreg$.

    To obtain $\Dreg=\Dspan$, we need to show that if $\kk[V]_{\leq d}$ contains a regular representation, then it spans $\kk(V)$ over $\kk(V)^G$. One can see this as follows. First, \cite[Lemma~2.2]{blum2024degree} shows that any set of monomials in the $x_j$'s that contains, for each character $\chi \in \widehat G$, one monomial receiving the $G$-action defined by that character, is also a complete set of coset representatives for the group of $G$-invariant Laurent monomials in the $x_j$'s inside the group of all Laurent monomials. Then, \cite[Lemma~2.5]{blum2024degree} yields that these same monomials form a $\kk(V)^G$-basis of $\kk(V)$.\footnote{Another approach to $\Dreg=\Dspan$ uses ideas from Section~\ref{sec:matrix-eqs}. Although we did not optimize the discussion in Section~\ref{sec:main-result} to draw this out, it follows from the ideas in Section~\ref{sec:matrix-eqs} that a regular representation found inside $\kk[V]_{\leq d}$ can fail to span $\kk(V)$ over $\kk(V)^G$ only if there are linear dependencies over $\kk(V)^G$ inside of the individual isotypic components of the regular representation in question. But this is impossible when $G$ is abelian because each isotypic component is one-dimensional over $\kk(V)^G$.}
\end{proof}

\begin{remark}
    Another nice case besides part~\ref{part:abelian-ineq} of Proposition~\ref{prop:c-Dreg-Dspan-topdeg} where some of the inequalities in part~\ref{part:general-ineq} become equalities is when $\kk=\CC$ and $G$ acts as a complex reflection group on $V$ (or, more generally, when $\kk$ has characteristic zero and $G$ is generated by pseudoreflections on $V$). In that case, $\Dreg=\Dspan=\topdeg$, as follows. The coinvariant algebra $\kk[V]_G$ is isomorphic as a $G$-representation to the regular representation (e.g., by \cite[Theorem~24-1]{kane2001reflection}), and is also a Poincar\'e duality algebra (by \cite[Theorem~1.5]{kane1994poincare}). In particular, the nonzero component of highest degree (say $D$) in $\kk[V]_G$ is a one-dimensional representation of $G$, and that representation only occurs with multiplicity one in $\kk[V]_G$, and therefore only in degree $D$. Meanwhile, because the Hilbert ideal $I_H:=\kk[V]^G_{>0}\kk[V]$, which is the kernel of the canonical surjection $\kk[V]\rightarrow \kk[V]_G$, is generated by invariants of positive degree, it cannot contain a copy of the one-dimensional representation $X:=(\kk[V]_G)_D$ in some degree component $(I_H)_d$ unless $\kk[V]_{<d}$ already contains a copy of $X$. In particular, the lowest-degree occurrence of $X$ in $\kk[V]$ does not lie in the kernel of the canonical surjection $\kk[V]\rightarrow \kk[V]_G$, so $X$ must occur in the same degree in $\kk[V]_G$. It follows that $X$ cannot occur anywhere in $\kk[V]_{<D}$. Thus $\Dreg = D$. Since clearly $\topdeg = D$ as well, this implies $\Dreg = \Dspan = \topdeg$, by Proposition~\ref{prop:c-Dreg-Dspan-topdeg} part~\ref{part:general-ineq}.
    
    But in general, all the inequalities in part~\ref{part:general-ineq} of Proposition~\ref{prop:c-Dreg-Dspan-topdeg} can be strict. Here are some examples illustrating the various phenomena:
    \begin{itemize}
        \item The inequality $\Dirr\leq \Dreg$ can be strict for two different reasons; we illustrate each.
        \begin{enumerate}
        \item {\em Multiplicity.} Suppose $\kk=\CC$. Let $V$ be what Edidin and Katz call the {\em complete multiplicity-free representation} of $G$ \cite[Example~IV.2]{edidin2025orbit}, i.e., the sum of all the irreducibles of $G$ taken once each. Then $\Dirr=1$, but if $G$ is not abelian then $V$ is smaller than the regular representation, so $\Dreg(G,V)>1$.

        Another example is if $\kk$ has characteristic $p$ and $G$ is a nontrivial $p$-group. In this case, the only irreducible representation of $G$ is the trivial representation; it occurs $|G|$ times as a composition factor of the regular representation. The trivial representation is already present in degree $0$, so $\Dirr = 0$. On the other hand, $\Dreg\geq 1$, with the inequality strict unless $V$ contains the regular representation.
        
        \item {\em The tensor algebra is bigger than the symmetric algebra.} Let $G$ be the dihedral group $D_3$ in its canonical action by real orthogonal rotation and reflection matrices on the plane $V=\CC^2$. This action makes $V$ an irreducible faithful representation. Then $V^{\otimes 2}$ can be seen as the space of $2\times 2$ complex matrices with the action of the original matrix group by conjugation. This decomposes into three subrepresentations: the scalar matrices (the trivial representation), the traceless symmetric matrices (on which the action is isomorphic to $V$ itself), and the skew-symmetric matrices (this is the sign representation). Thus, all irreducibles of $G$ occur in $V^{\otimes 2}$ and thus in $\CC \oplus V \oplus V^{\otimes 2}$, so $\Dirr=2$. On the other hand, $\Dreg(G,V)\geq 3$ because the sign representation does not occur anywhere in the space $\CC[V]_{\leq 2}$ of homogeneous polynomials of degree $\leq 2$. (It turns out that $\Dreg=3$ in this case.) In fact, $\CC[V]_{\leq 2}$ is isomorphic as a $G$-representation precisely to the sum of all the constituents of $\CC\oplus V \oplus V^{\otimes 2}$ {\em except} for the one corresponding to the skew-symmetric matrices.
        \end{enumerate}
        
        \item The inequality $\Dreg\leq \Dspan$ is strict when a regular representation is found in $\kk[V]_{\leq d}$ but it doesn't span $\kk(V)$ over $\kk(V)^G$ because of linear relations over $\kk(V)^G$. To illustrate, consider $A_4$, the alternating group on four points. This group has three one-dimensional characters, and a three-dimensional irreducible representation  as the rotational symmetries of a regular tetrahedron. If $V$ is the canonical permutation representation and $W$ is the three-dimensional irreducible representation, then $\CC[V]_1$ contains one copy of $W$ (and a trivial representation), while $\CC[V]_2$ contains two copies of $W$ and all three one-dimensional characters (including the trivial representation twice). Thus $\CC[V]_1\oplus \CC[V]_2$ contains a regular representation, and $\Dreg=2$. However, one of the copies of $W$ in degree 2 is obtained from the degree $1$ copy by multiplying by an element of the degree 1 trivial representation, so there is a linear dependence over $\kk(V)^G$ between them, and $\Dspan > 2$. (In fact, $\Dspan = 3$ in this case.)
        
        \item The inequality $\Dspan\leq \topdeg$ can be strict because generating $\kk[V]$ as a module over the ring $\kk[V]^G$ is a stricter demand on the set of elements of $\kk[V]_{\leq d}$ than generating $\kk(V)$ as a vector space over the field $\kk(V)^G$. For example the representation of $G=C_n$ given in Section~\ref{ex:sharp} (with the generator $g\in G$ acting by $gx = \zeta^{-1}x$, $gy=\zeta y$ with $\zeta$ a primitive $n$th root of unity) had $\Dspan=(n-1)/2$, but $\topdeg=n-1$ in that case because the monomials $x^{n-1},y^{n-1}$ are not in the submodule generated over $\CC[V]^G$ by the polynomials of degree less than $n-1$. Indeed, because $\CC[V]^G$ does not contain any monomials in $x$ or $y$ alone of degree less than $n$, one cannot get $x^{n-1}$ or $y^{n-1}$ from polynomials of lower degree by multiplication by elements of $\CC[V]^G$. On the other hand, when working with $\CC(V)$ as a vector space over $\CC(V)^G$, one can for example obtain $x^{n-1}$ as $(x^{n-1}/y)y$ since $x^{n-1}/y$ is in $\CC(V)^G$. 
        
        In general, essentially the same argument gives that whenever $G$ is cyclic and $V$ contains a one-dimensional faithful representation over $\CC$, then $\topdeg(G,V)=|G|-1$. On the other hand, in this same situation, $\Dspan(G,V)$ varies in an interesting and not-yet-well-understood way with the representation.
    \end{itemize}
\end{remark}

\subsection{Monotonicity and subadditivity results}\label{sec:monotonicity}

In this section we show that $\Dspan$ satisfies certain monotonicity and subadditivity results. The analogous monotonicity results fail for $\bfield$, but the subadditivity analogue goes through, with $\max$ replacing the sum.

\begin{definition}[{See~\cite[VIII.3]{lang2012algebra}}]
    Suppose that $L_1$ and $L_2$ are subfields of a field $M$ containing $\kk$. Then $L_1$ and $L_2$ are called {\em linearly disjoint} over $\kk$ if elements of $L_1$ are linearly dependent over $L_2$ if and only if they are linearly dependent over $\kk$.
\end{definition}
Linear disjointness is equivalent to the map
    $L_1\otimes_\kk L_2\to M$ given by 
    $\sum_{i} a_i\otimes b_i\mapsto \sum_i a_ib_i$ being injective.
From this it is clear that linear disjointness is symmetric in $L_1$ and $L_2$.
\begin{lemma}\label{lem:disjoint-invariants}
Suppose a group $G$ acts on a field $M$ by automorphisms, and $L$ is a subfield of $M$ that is setwise stable under the action of $G$. Then $L$ and $M^G$ are linearly disjoint over $L^G$.
\end{lemma}

\begin{proof}
Suppose that $a_1,\dots, a_n\in M^G$ are linearly dependent over $L$. There is an $m$ such that $a_1,\dots, a_{m-1}$ are linearly independent and $a_1,\dots, a_m$ are linearly dependent.
Then we can write $a_m=\sum_{i=1}^{m-1}a_ib_i$, where $b_1,\dots,b_{m-1}\in L$ are unique. If $g\in G$ then we have
$$
a_m=g\cdot a_m=\sum_{i=1}^{m-1}(g\cdot a_i)(g\cdot b_i)=\sum_{i=1}^{m-1}a_i(g\cdot b_i).
$$
Because $b_1,\dots,b_{m-1}\in L$ are unique, and $g\cdot b_1,\cdots, g\cdot b_{m-1}$ are in $L$, 
we have $b_i=g\cdot b_i$ for all $g\in G$ and all $i$. So $b_1,\dots,b_{m-1}\in L^G$ and $a_1,\dots,a_n$ are linearly dependent over $L^G$.
\end{proof}
\begin{theorem}[Monotonicity in the representation]\label{thm:monotonic-in-the-rep}
Suppose $G$ is a finite group over $\kk$.
If $V$ and $W$ are finite-dimensional representations of $G$ over $\kk$ such that $V$ is faithful and $W$ surjects onto it, then we have
$$
\Dspan(G,W)\leq \Dspan(G,V).
$$
\end{theorem}
\begin{proof}
Because $W$ surjects onto $V$, $V^*$ embeds as a $\kk G$-module in $W^*$, so $\kk(V)$ embeds as a $\kk G$-module in $\kk(W)$. We identify $\kk(V)$ with its image in $\kk(W)$. Then, by Lemma~\ref{lem:disjoint-invariants}, the subfields $\kk(V)$ and $\kk(W)^G$ are linearly disjoint over $\kk(V)^G$. It follows that any basis of $\kk(V)$ as a $\kk(V)^G$-vector space is also a basis of $\kk(V)\kk(W)^G$ as a $\kk(W)^G$-vector space, and we have
$$[\kk(V)\kk(W)^G:\kk(W)^G]=[\kk(V):\kk(V)^G]=|G|.
$$
Since $\kk(V)\kk(W)^G\subseteq \kk(W)$ and $[\kk(W):\kk(W)^G] = |G|$, we must have $\kk(V)\kk(W)^G=\kk(W)$.
Let $d=\Dspan(G,V)$. Then we have
$\kk[V]_{\leq d}\kk(V)^G=\kk(V)$ and
$$
\kk(W)=\kk(V)\kk(W)^G= \kk[V]_{\leq d}\kk(W)^G\subseteq
\kk[W]_{\leq d} \kk(W)^G.
$$
This proves that $\Dspan(G,W)\leq d$.
\end{proof}

\begin{remark}
    In the non-modular case, $W$ surjects onto $V$ if and only if $V$ embeds in $W$, if and only if $V$ is a summand of $W$, so the assumption that $W$ surjects onto $V$ in the hypothesis of Theorem~\ref{thm:monotonic-in-the-rep} can be replaced with the assumption that $V\subseteq W$.
\end{remark}

\begin{remark}
This monotonicity property fails for $\bfield$, see \cite[Example~3.10]{blum2024degree}.
\end{remark}

\begin{remark}
    The ring analogue to $\Dspan$, $\topdeg$, satisfies a monotonicity property almost opposite to the one for $\Dspan$ given in Theorem~\ref{thm:monotonic-in-the-rep}: $\topdeg(G,V)\leq \topdeg(G,W)$ if $V\subseteq W$, see \cite[Lemma~5]{kohls2014top}. On the other hand, $\topdeg(G,V\oplus W) \leq \topdeg(G,V)+\topdeg(G,W)$ \cite[Lemma~6]{kohls2014top}.
\end{remark}

\begin{lemma}[Monotonicity in the group]\label{lem:monotonic-in-the-group}
    Suppose that $V$ is a finite dimensional representation of a finite group $G$ over the field $\kk$ and $H\subseteq G$ is a subgroup. Then $V$ is a representation of $H$ by restriction, and we have $\Dspan(H,V)\leq \Dspan(G,V)$.
    \end{lemma}
    \begin{proof}
    Let $d=\Dspan(G,V)$. Then we have
    $$
\kk(V)=\kk[V]_{\leq d} \kk(V)^G\subseteq \kk[V]_{\leq d} \kk(V)^H\subseteq \kk(V),
$$
so $\kk(V)=\kk[V]_{\leq d}\kk(V)^H$ and $\Dspan(H,V)\leq d$.
    \end{proof}
\begin{remark}
This monotonicity property also does not hold for $\bfield$. If $n\geq 4$ and $V$ is the canonical permutation representation of $S_n$ on $V=\kk^n$, then we have $\bfield(A_n,V)={n\choose 2}>n=\bfield(S_n,V)$. In this case we have $\Dspan(S_n,V)=\Dspan(A_n,V)={n\choose 2}$.
\end{remark}

\begin{remark}
    An analogue to Lemma~\ref{lem:monotonic-in-the-group} is known for $\topdeg(G,V)$ \cite[Lemma~4]{kohls2014top}, and can be proven by the same technique.
\end{remark}

\begin{proposition}[Subadditivity]\label{prop:subadditivity}
    Suppose that $G$ acts faithfully on $V=V_1\oplus \dots\oplus V_r$, with each $V_j$ a (not necessarily faithful) $G$-subrepresentation, and let $G_j$ be the image of $G$ in $GL(V_j)$ for each $j$. Then
    \[
        \topdeg(G,V)\leq \sum_{j=1}^r \topdeg(G_j,V_j),
    \]
    and
    \[
        \Dspan(G,V)\leq \sum_{j=1}^r \Dspan(G_j,V_j).
    \]
    The hypothesis makes $G$ a subdirect product of $G_1,\dots,G_r$; if the product is direct, then also 
    \[
        \bfield(G,V)\leq\max_j \bfield(G_j,V_j).
    \]
\end{proposition}

\begin{proof}
    For $\topdeg$, the case where $r=2$ is immediate from \cite[Lemma~6]{kohls2014top} (which is stated for modular representations, but works in general), because the action of $G$ on each $V_j$ factors through $G_j$. The general case follows by induction on $r$.

    For $\Dspan$, we argue as follows (and essentially the same argument would also have worked for $\topdeg$). First, it is enough to show that the $\kk(V)^G$-span of the polynomials of degree up to $\sum_j \Dspan(G_j,V_j)$ contains $\kk[V]$, because $\kk[V]$ spans $\kk(V)$ over $\kk(V)^G$ (i.e.,  $\Xi$ as in \eqref{eq:Xi} is surjective). Second, we have
    \[
    \kk[V]\cong \kk[V_1] \cdots \kk[V_r]
    \]
    as graded algebras, i.e., $\kk[V]$ is $\kk$-spanned by elements of the form $f_1f_2\cdots f_r$ where each $f_j$ is a homogeneous element of $\kk[V_j]$. Third, each $f_j$ lies in the $\kk(V_j)^{G_j}$-span of $\kk[V_j]_{\leq \Dspan(G_j,V_j)}$. Since $\kk(V_j)^{G_j}\subseteq \kk(V)^G$ for each $j$, this means all of $\kk[V]$ lies in the $\kk(V)^G$-span of 
    \[
    \kk[V_1]_{\leq \Dspan(G_1,V_1)}\cdots \kk[V_r]_{\leq \Dspan(G_r,V_r)},
    \]
    which has elements of degree at most $\sum_{j=1}^r \Dspan(G_j,V_j)$. This completes the argument for $\Dspan$.

    The argument for $\bfield$ is similar. Note first that $\kk(V)^G\supseteq \kk(V_j)^{G_j}$ for each $j=1,\dots,r$. Then, the direct product hypothesis implies that
    \[
    \kk[V]^G = \kk[V_1]^{G_1} \cdots \kk[V_r]^{G_r},
    \]
    and thus
    \[
    \kk[V]^G \subseteq \kk(V_1)^{G_1} \cdots \kk(V_r)^{G_r},
    \]
    where the right side of this last displayed containment is the composite of fields. Since $\kk(V)^G$ is the fraction field of $\kk[V]^G$, it follows that $\kk(V)^G$ is the smallest field containing all the subfields $\kk(V_j)^{G_j}$. Then the union of generating sets for each $\kk(V_j)^{G_j}$ forms a generating set for $\kk(V)^G$. The stated inequality for $\bfield$ follows.
\end{proof}

\begin{remark}
    All three inequalities in Proposition~\ref{prop:subadditivity} are sharp. Equality is attained in all three, for example, when $G_1,\dots,G_r$ are all  cyclic, $G$ is their direct product, and $V_1,\dots,V_r$ are one-dimensional faithful representations over $\CC$ of $G_1,\dots,G_r$ respectively. In this case, we have $\bfield(G_j,V_j)=|G_j|$ and $\Dspan(G_j,V_j)=\topdeg(G_j,V_j)=|G_j|-1$ for each $j$, and meanwhile, $\bfield(G,V)=\max_j(|G_j|)$ and $\Dspan(G,V)=\topdeg(G,V)=\sum_j|G_j|-r$.
\end{remark}

\subsection{Characteristic-free bounds}

In this section we show that the point of view of the present work leads readily to the conclusion that $\Dspan$ is bounded by $|G|-1$, independent of the field characteristic. This provides a parallel story for $\Dspan$ to the fact that $\bfield$ is bounded above by $|G|$ in any characteristic \cite[Corollary~2.3]{fleischmann2007homomorphisms}, while this holds only in non-modular characteristic for the classical Noether number $\beta(G,V)$ \cite{noether, fleischmann2000noether, fogarty2001noether}; the latter in fact is not bounded in terms of $G$ alone if $\Char \kk$ divides $|G|$ \cite{richman1996invariants}. Indeed, the ring analogue of $\Dspan$, namely 
$\topdeg$,  satisfies
\[
\topdeg(G,V) \leq |G|-1,
\]
in characteristic zero \cite{schmid1991finite} or more generally the non-modular case, but is not bounded in terms of $G$ alone if $\Char \kk$ divides $|G|$ \cite{kohls2014top}.

As mentioned in the introduction, it also provides a mild refinement of a conclusion due to Koll\'ar and Tiep. The quantity $\Dreg$ is studied in \cite{kollar2024simple}. The main result is that any irreducible representation of $\kk G$ is both a submodule and a quotient of $\operatorname{Sym}^m(V)$ for some $1\leq m \leq |G|$. The proof actually shows that one can take $0\leq m \leq |G|-1$. It proceeds by finding the regular representation as a summand of 
\[
\operatorname{Sym}^{|G|-1}(V) \oplus \dots \oplus \operatorname{Sym}^{|G|-|Z|}(V),
\]
where $Z:=Z(G)$ is the center of $G$. Thus, replacing $V$ with $V^*$, this implies that \[
\Dreg \leq |G|-1,
\]
and this holds independent of the ground field.

We now demonstrate that the point of view adopted in the present work yields a very short proof that $\Dspan \leq |G| - 1$, independent of characteristic. By Proposition~\ref{prop:c-Dreg-Dspan-topdeg}, part~\ref{part:general-ineq}, this refines Koll\'ar and Tiep's conclusion that $\Dreg \leq |G| - 1$. 

\begin{theorem}\label{thm:refine-KP}
    If $G$ is a finite group, $\kk$ is a field, and $V$ is a finite-dimensional representation of $G$ over $\kk$, then
    \[
    \Dspan \leq |G| - 1.
    \]
\end{theorem}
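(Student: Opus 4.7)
The plan is to apply the classical primitive element theorem to produce a primitive element for $\kk(V)/\KK$ inside the degree-one piece $V^* = \kk[V]_1$; the powers of such an element will then span $\kk(V)$ over $\KK$ while remaining in $\kk[V]_{\leq |G|-1}$. By Lemma~\ref{lem:base-change}, I may assume without loss of generality that $\kk$ is algebraically closed, and in particular infinite. Since $V$ is a faithful representation, $G$ acts faithfully on $V^*$ and hence on $\kk(V)$, so by Artin's theorem $\kk(V)/\KK$ is Galois of degree $|G|$, and in particular separable.

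The key step is to find a primitive element already in $V^* = \kk[V]_1$. For each nontrivial $g \in G$, faithfulness of the action on $V^*$ ensures that
\[
\{(c_1,\dots,c_n) \in \kk^n : g(c_1x_1 + \dots + c_nx_n) = c_1 x_1 + \dots + c_n x_n\}
\]
is a proper $\kk$-linear subspace of $\kk^n$. Since $\kk$ is infinite, the union of these $|G|-1$ proper subspaces does not cover $\kk^n$, so I can choose $(c_1,\dots,c_n)$ outside all of them, obtaining $\alpha = \sum_i c_i x_i \in \kk[V]_1$ with trivial $G$-stabilizer; since no nontrivial element of $\Gal(\kk(V)/\KK) = G$ fixes $\alpha$, it is a primitive element for the extension.

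Since $[\kk(V):\KK] = |G|$, the minimal polynomial of $\alpha$ over $\KK$ has degree $|G|$, and $\{1, \alpha, \alpha^2, \dots, \alpha^{|G|-1}\}$ is a $\KK$-basis of $\kk(V)$. Each $\alpha^i$ lies in $\kk[V]_i \subset \kk[V]_{\leq |G|-1}$, so the latter spans $\kk(V)$ over $\KK$, giving $\Dspan \leq |G|-1$. There is no serious obstacle; the only subtle point is that the primitive element must be a polynomial of low degree, which is handled by the reduction to infinite $\kk$ together with the genericity of the linear form $\alpha$.
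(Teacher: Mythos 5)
Your proof is correct but takes a genuinely different route from the paper. The paper's argument stays inside the generic-orbit-ideal framework already built for Theorem~\ref{thm:main}: writing $\Xi_d$ for the restriction of $\Xi$ to $\KK[X_1,\dots,X_n]_{\leq d}$, it observes that if $\Xi_d$ is not yet surjective then $\dim_\KK\im\Xi_{d+1} > \dim_\KK\im\Xi_d$ (for if equality held, the whole homogeneous piece $\KK[X_1,\dots,X_n]_{d+1}$ would lie in $I$, forcing every monomial of degree $>d$ into $I$ and contradicting the surjectivity of $\Xi$); summing the forced increments from $\im\Xi_{-1}=0$ to $|G|=\dim_\KK\kk(V)$ gives $\Dspan+1\leq|G|$. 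That argument is purely a dimension count on $\im\Xi$ and needs no hypothesis on $\kk$ at all --- no base change, no infinitude. Your proof instead constructs an explicit low-degree spanning set: after using Lemma~\ref{lem:base-change} to pass to an infinite (algebraically closed) ground field, you select a linear form $\alpha\in\kk[V]_1$ with trivial $G$-stabilizer by avoiding the finitely many proper fixed-subspaces, note that $\alpha$ then has $|G|$ distinct Galois conjugates so $\KK(\alpha)=\kk(V)$, and conclude that $1,\alpha,\dots,\alpha^{|G|-1}$ is a $\KK$-basis lying in $\kk[V]_{\leq|G|-1}$. This is the classical primitive element argument, adapted to guarantee the primitive element is homogeneous of degree one; it is more elementary, self-contained, and yields the slightly stronger conclusion that the spanning set may be taken to be the powers of a single linear form. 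The trade-off is that your route needs the base change to an infinite field, whereas the paper's is uniform, and the paper's version dovetails with the $\Xi$-map machinery that is the organizing tool of the whole Section~\ref{sec:main-result}.
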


\begin{proof}
    By definition, $\Dspan \leq d$ if and only if the restriction 
    \[
    \KK[X_1,\dots,X_n]_{\leq d} \rightarrow \kk(V)
    \]
    of $\Xi$ to $\KK \otimes \kk[V]_{\leq d}$ is surjective. Let us denote this restriction by $\Xi_d$ for brevity.
    
    Now $\kk(V)$ is a finite-dimensional algebra over $\KK$, of dimension $|G|$. The key idea is the following assertion:
    
    \begin{center}
        ($\star$) {\em If $\Xi_d$ is not surjective onto $\kk(V)$, then $\dim_\KK \im \Xi_{d+1} - \dim_\KK \im \Xi_d \geq 1$.}
    \end{center}
    This assertion ($\star$) implies the result by counting, as follows. If we set the (natural) convention that $\im \Xi_{-1}=0$, then ($\star$) also holds for $d=-1$ (because $\Xi_0$ has image $\KK$). If $d<\Dspan$, then $\Xi_d$ is not surjective, and it follows that for each of the $\Dspan + 1$ values of $d$ from $-1$ to $\Dspan - 1$, $\Xi_d$ fulfills the hypothesis of ($\star$). Thus, we have
    \begin{align*}
        |G| &= \dim_\KK \im \Xi_{\Dspan} \\
        &= \sum_{d=-1}^{\Dspan-1} \left(\dim_\KK \im \Xi_{d+1} - \dim_\KK \im \Xi_d\right)\\
        &\geq \Dspan + 1,
    \end{align*} 
    where the last inequality is by ($\star$). This yields the desired conclusion.
    
    We now prove ($\star$). Suppose, for a contradiction, that there exists $d\geq 0$ for which $\Xi_d$ is not surjective, but also for which $\dim_\KK \im \Xi_{d+1} = \dim_\KK \im \Xi_d$. Then the $\Xi$-image of $\KK[X_1,\dots,X_n]_{\leq d+1}$ is no bigger than the $\Xi$-image of $\KK[X_1,\dots,X_n]_d$, and it follows that the entire homogeneous component
    \[
    \KK[X_1,\dots,X_n]_{d+1}
    \]
    must lie in the generic orbit ideal $I$ (i.e., in the kernel of $\Xi$). But then every monomial of degree greater than $d+1$ must also lie in $I$, and it follows that for any $D>d$, $\im \Xi_D$ is no bigger than $\im \Xi_d$. Thus, 
    \[
    \im \Xi = \bigcup_{D\geq d} \im \Xi_D = \im \Xi_d,
    \]
    and $\Xi$ itself is not surjective. But $\Xi$ is surjective. This contradiction completes the proof.
\end{proof}

\begin{remark}
    The bound $\Dspan \leq |G|-1$ is sharp: equality is attained by any faithful scalar action of a cyclic group.
\end{remark}

We can give a generally tighter bound for permutation groups:

\begin{theorem}\label{thm:permutations}
    Suppose that $G$ acts on the $n$-dimensional $\kk$-vector space $V:=\kk^n$ by permuting the coordinates. Then $G$ acts on $\kk[V]=\kk[x_1,x_2,\dots,x_n]$ by permuting the variables. Let
     ${\mathcal O}_1,{\mathcal O}_2,\dots,{\mathcal O}_r$ be the orbits for the action of $G$ on $\{x_1,x_2,\dots,x_n\}$. We have 
  $$
    \Dspan(G,V)\leq \topdeg(G,V)\leq  \sum_{i=1}^r {n_i\choose 2}\leq {n\choose 2},
    $$
    where $n_i=|\mathcal O_i|$ for each $i=1,\dots,r$.
    \end{theorem}

\begin{proof}
    The group $G$ is contained in the larger permutation group $H=S_{n_1}\times S_{n_2}\times \cdots S_{n_r}$ with each factor acting separately on each $\mathcal O_i$ and thus on the sub-vector space $V_i\cong \kk^{n_i}$ of $V$ corresponding to the coordinates belonging to $\mathcal O_i$. For each $\mathcal O_i$, we have $\topdeg(S_{n_i}, V_i) = \binom{n_i}{2}$ by the main result of \cite{gobel1995computing}. Then
    \[
    \Dspan(G,V)\leq \topdeg(G,V) \leq \topdeg(H,V) \leq \sum_{i=1}^r \topdeg(S_{n_i},V_i) = \sum_{i=1}^r {n_i\choose 2},
    \]
    where the first inequality is from Proposition~\ref{prop:c-Dreg-Dspan-topdeg}, part~\ref{part:general-ineq}, the second is from Lemma~\ref{lem:monotonic-in-the-group}, and the third is from Proposition~\ref{prop:subadditivity}.
\end{proof}

We also give a trivial lower bound on $\Dspan$ obtained by dimension-counting:
\begin{proposition}\label{prop:dim-counting-lower-bound}
    If $G$ is a finite group, $\kk$ is a field, and $V$ is representation of $G$ over $\kk$ of finite dimension $n$, then
    \[
    \binom{\Dspan + n}{n}\geq \binom{\Dreg + n}{n} \geq |G|.
    \]
\end{proposition}

\begin{proof}
    Only the second inequality needs to be proven, as the first is immediate from Proposition~\ref{prop:c-Dreg-Dspan-topdeg}, part~\ref{part:general-ineq}. The second is almost as immediate: to contain the regular representation, $\kk[V]_{\leq d}$ must have at least its dimension as a $\kk$-vector space. So the desired inequality follows from the fact that
    \[
    \dim_\kk \kk[V]_{\leq d} = \binom{d+n}{n}.\qedhere
    \]
\end{proof}

\begin{remark}
    The second inequality in Proposition~\ref{prop:dim-counting-lower-bound} is almost never an equality because equality requires that the regular representation be exactly isomorphic to the direct sum of the first $\Dreg$ symmetric powers of $V^*$, rather than just contained in it. However, there are cases where this happens:
    \begin{enumerate}
        \item $V$ is one-dimensional (so $G$ is cyclic, acting by scalars). In this case, the lower bound in Proposition~\ref{prop:dim-counting-lower-bound} coincides with the upper bound in Theorem~\ref{thm:refine-KP}.

        \item The trivial representation of $G$ is a summand of its regular representation (this happens in non-modular characteristic) and $V$ is the complement. In this case, $\Dspan=\Dreg=1$ and $n=|G|-1$.
    \end{enumerate}
\end{remark}

\begin{corollary}\label{cor:explicit-lower-bound}
    Under the same hypotheses as Proposition~\ref{prop:dim-counting-lower-bound}, $\Dreg$ and $\Dspan$ satisfy
    \[
    \Dspan \geq \Dreg \geq \sqrt[n]{n!|G|}-\frac{n+1}{2}.
    \]
\end{corollary}

\begin{proof}
    One extracts this from Proposition~\ref{prop:dim-counting-lower-bound} by appying the estimate
    \[
    \binom{d+n}{n}=\frac{(d+n)\cdot\ldots\cdot(d+1)}{n\cdot\ldots \cdot 1}\leq \frac{\left(d+(n+1)/2)\right)^n}{n!}.\qedhere
    \]
\end{proof}

\begin{remark}
    The estimate in the corollary is only useful when $|G|$ is very large compared to $n$.
\end{remark}

\begin{remark}
    There is a similar lower bound for $\bfield$: by \cite[Theorem~3.2]{blum2024degree}, it satisfies $\bfield\geq \sqrt[n]{|G|}$.
\end{remark}

\section{An application}\label{sec:applications}

In this short section we apply Theorem~\ref{thm:main} to give  a bound on 
$\bfield$ for cyclic groups of prime order in terms of the character of $V$.

\begin{proposition}\label{prop:cyclotomic}
    Let $G=C_p$ be cyclic of odd prime order $p$ and let $V$ be a $\CC$-representation with character $\chi$. Let $\QQ(\chi)$ be the field generated over $\QQ$ by the values of $\chi$.  Then
    \[
    \bfield(G,V) \leq 2[\QQ(\chi):\QQ]+1.
    \]
\end{proposition}

\begin{proof}
    Note that $\kk:=\QQ(\chi)\subseteq \QQ(\zeta)$, where $\zeta$ is a primitive $p$th root of unity, thus it is a Galois extension of $\QQ$. We first claim that all the values of $\chi$ on the nonidentity elements of $G$ are Galois conjugate. We see this as follows.

    The automorphism group $\Aut(G)$ of $G$ is isomorphic to the unit group of $\ZZ/p\ZZ$; if $[a]\in (\ZZ/p\ZZ)^\times$ is an invertible residue class mod $p$,  where $a$ is an integer, then the corresponding automorphism $\phi_{[a]}$ of $G$ maps $g\mapsto g^a$ for all $g\in G$. Meanwhile, $(\ZZ/p\ZZ)^\times$ is also isomorphic to the Galois group of $\QQ(\zeta)/\QQ$, with the automorphism $\pi_{[a]}\in\Gal(\QQ(\zeta)/\QQ)$ that corresponds to $[a]$ sending $\zeta$ to $\zeta^a$. Then $\chi(\phi_{[a]}g) = \chi(g^a)=\pi_{[a]}(\chi(g))$, whereupon transitivity of the action of $\Aut(G)$ on the nonidentity elements in $G$ implies the claimed Galois conjugacy of the values of $\chi$ on these elements.

    Because $\chi(g)\in\kk$ for all $g$ (by definition of $\kk$) and the Galois conjugacy classes in $\kk$ have size at most $[\kk:\QQ]$, it follows that $\chi$ takes on at most $[\kk:\QQ]$ values on the nonidentity elements of $G$, and therefore at most $[\kk:\QQ]+1$ in total.

    Applying Brauer's theorem \cite[Theorem~$1^*$]{brauer1964note}, we obtain
    \begin{equation}\label{eq:from-brauer}
    \Dirr \leq [\kk:\QQ]+1 - 1 = [\kk:\QQ],
    \end{equation}
    where, as in the introduction and Section~\ref{sec:Dirr-Dreg-Dspan-topdeg}, $\Dirr$ is the lowest $d$ such that every irreducible representation of $G$ occurs as a subrepresentation of one the first $\Dirr$ tensor powers of $V$. Therefore,
    \[
    \bfield\leq 2\Dspan+1 = 2\Dirr+1 \leq 2[\kk:\QQ]+1,
    \]
    where the first inequality is from Theorem~\ref{thm:main}, the (middle) equality is from Proposition~\ref{prop:c-Dreg-Dspan-topdeg}, part~\ref{part:abelian-ineq}, and the final inequality is from \eqref{eq:from-brauer}.
\end{proof}

\begin{example}\label{ex:five}
Let $V$ be a representation of $G=C_p$ defined over a (real or imaginary) quadratic extension of $\QQ$, for example the representation of dimension $(p-1)/2$ whose character on a fixed generator is the sum $\sum_{a\in U}\zeta^a$, where $\zeta$ is a fixed primitive $p$th root of unity and $U\subset (\ZZ/p\ZZ)^\times$ is the set of quadratic residues mod $p$. Then Proposition~\ref{prop:cyclotomic} gives us that $\bfield(G,V)\leq 5$.
\end{example}

\section*{Acknowledgements}

We thank Victor Reiner for calling our attention to \cite[Theorem~$1^*$]{brauer1964note}, Fabian Reimers and M\"ufit Sezer for helpful feedback, and two anonymous referees for thoughtful comments which greatly improved the paper. BBS visited HD at Northeastern University in October 2024, and gratefully acknowledges the hospitality of the Mathematics Department. BBS was partially supported by Soledad Villar's NSF CAREER 2339682. HD was partially supported by NSF DMS 2147769 and a Simons Fellowship.

\bibliographystyle{alpha}
\bibliography{bib}

\end{document}